\theoremstyle{plain} 
\newtheorem{theorem}[subsection]{Theorem}
\newtheorem{proposition}[subsection]{Proposition}
\newtheorem{corollary}[subsection]{Corollary}
\newtheorem{lemma}[subsection]{Lemma}
\theoremstyle{definition}
\newtheorem{definition}[subsection]{Definition}
\theoremstyle{remark}
\newtheorem{remark}[subsection]{Remark}
\newtheorem{example}[subsection]{Example}
\newtheorem{exercise}[subsection]{Exercise}
\newenvironment{ppmatrix}
{\left\lgroup\begin{matrix}}
{\end{matrix}\right\rgroup}
\newcommand{\defn}[1]{{\bf #1}}
\newcommand{\C}{\ensuremath{\mathcal{C}}}
\newcommand{\D}{\ensuremath{\mathcal{D}}}
\newcommand{\E}{\ensuremath{\mathcal{E}}}
\newcommand{\V}{\ensuremath{\mathcal{V}}}
\newcommand{\J}{\ensuremath{\mathfrak{J}}}
\newcommand{\K}{\ensuremath{\mathbb{K}}}
\newcommand{\N}{\ensuremath{\mathbb{N}}}
\newcommand{\R}{\ensuremath{\mathbb{R}}}
\newcommand{\Z}{\ensuremath{\mathbb{Z}}}
\newcommand{\AbAlg}{\ensuremath{\mathsf{AbAlg}}}
\newcommand{\Lie}{\ensuremath{\mathsf{Lie}}}
\newcommand{\Liek}{\ensuremath{\mathsf{Lie}}_{\K}}
\newcommand{\Alg}{\ensuremath{\mathsf{Alg}}}
\newcommand{\AssocAlg}{\ensuremath{\mathsf{AssocAlg}}}
\newcommand{\qLiek}{\ensuremath{\mathsf{qLie}}_{\K}}
\newcommand{\Vect}{\ensuremath{\mathsf{Vect}}}
\newcommand{\Mag}{\ensuremath{\mathsf{Mag}}}
\newcommand{\Set}{\ensuremath{\mathsf{Set}}}
\newcommand{\kar}{\ensuremath{\mathrm{char}}}
\newcommand{\Hom}{\ensuremath{\mathrm{Hom}}}
\newcommand{\Der}{\ensuremath{\mathrm{Der}}}
\newcommand{\End}{\ensuremath{\mathrm{End}}}
\newcommand{\Forget}{\ensuremath{\mathrm{Forget}}}
\newcommand{\Free}[1]{\ensuremath{\K\ldbrack{#1}\rdbrack}}
\newcommand{\LACC}{{\rm (LACC)}}
\begin{document}

\author[Summer School in Algebra and Topology]{Tim Van~der Linden}
\address[Tim Van~der Linden]{Institut de
Recherche en Math\'ematique et Physique, Universit\'e catholique
de Louvain, che\-min du cyclotron~2 bte~L7.01.02, B--1348
Louvain-la-Neuve, Belgium}
\email{tim.vanderlinden@uclouvain.be}

\thanks{These lecture notes were prepared for the \emph{Summer School in Algebra and Topology} held at the \emph{Institut de Recherche en Mathématique et Physique} of the \emph{Université catholique de Louvain}, 12th--15th September 2018. The current, revised version dates \today.}
\thanks{Tim Van der Linden is a Research Associate of the Fonds de la Recherche Scientifique--FNRS}

\subjclass[2020]{17-01, 18E13}

\begin{abstract}
A \emph{non-associative algebra} over a field $\K$ is a $\K$-vector space $A$ equipped with a bilinear operation
\[
{A\times A\to A\colon\; (x,y)\mapsto x\cdot y=xy}.
\]
The collection of all non-associative algebras over $\K$, together with the product-preserving linear maps between them, forms a variety of algebras: the category $\Alg_\K$. The multiplication need not satisfy any additional properties, such as associativity or the existence of a unit. Familiar categories such as the varieties of associative algebras, Lie algebras, etc.\ may be found as subvarieties of~$\Alg_\K$ by imposing equations, here $x(yz)=(xy)z$ (associativity) or $xy =- yx$ and $x(yz)+z(xy)+ y(zx)=0$ (anti-commutativity and the Jacobi identity), respectively.
\newline\indent
The aim of these lectures is to explain some basic notions of categorical algebra from the point of view of non-associative algebras, and vice versa. As a rule, the presence of the vector space structure makes things easier to understand here than in other, less richly structured categories.
\newline\indent
We explore concepts like normal subobjects and quotients, coproducts and protomodularity. On the other hand, we discuss the role of (non-associative) polynomials, homogeneous equations, and how additional equations lead to reflective subcategories.	
\end{abstract}

\title{Non-associative algebras}

\maketitle

\tableofcontents

\section{Introduction}

An \emph{algebra} is a vector space over a field, equipped with an additional bilinear multiplication. In practice, in the literature, and historically, this algebra multiplication is usually taken to be associative, often also commutative, or else it is asked to satisfy some other rule like the Jacobi identity. For us here, such special types of algebras will be important classes of examples. However, in these lectures about \emph{non-associative algebras} we will \emph{a priori} not ask that the multiplication of an algebra satisfies any rule at all, besides that it is bilinear. This makes it possible to treat many different types of algebras all at once, and interpret their common properties by means of categorical-algebraic concepts.

From the point of view of Category Theory, vector spaces are simple and extremely well behaved. (Not only is a category of vector spaces always abelian; also the fact that every vector space has a basis is important for us.) As we shall see, adding a multiplication to a vector space actually makes its behaviour worse---so for us, more interesting. The reason is, that this allows us to study categorical concepts which are often trivial for vector spaces, but whose definition makes full sense in a context which is only slightly different. In a way, it is still all Linear Algebra. The categorical properties which we shall treat here are mainly those related to the context of semi-abelian categories, which find a concrete use for instance in homology of non-abelian objects.	

\section{Non-associative algebras}
Throughout these lectures, we fix a field $\K$. 

\begin{definition}\label{Def:NAA}
A \defn{(non-associative) algebra $(A,\cdot)$ over $\K$} is a $\K$-vector space~$A$ equipped with a bilinear operation ${\cdot\colon A\times A\to A\colon (x,y)\mapsto x\cdot y}$.	
\end{definition}

Recall that $\cdot$ being \defn{bilinear} means that it is linear in both variables $x$ and $y$, so that for all $\lambda\in \K$ and $x$, $x'$, $y$, $y'\in A$,
\begin{gather*}
(x+x')\cdot y=x\cdot y+ x'\cdot y\text{,}
\qquad\qquad
x\cdot (y+y')=x\cdot y+ x\cdot y'\text{,}\\
(\lambda x)\cdot y=\lambda (x\cdot y)=x\cdot (\lambda y)
\text{.}
\end{gather*}
In other words, it induces a unique linear map $A\otimes A\to A$ sending elements of the form $x\otimes y\in A\otimes A$ to $x\cdot y\in A$.

Depending on the type of non-associative algebra which is being considered, to write the multiplication, several notations are common. We shall often drop the dot and write $xy$ for the product $x\cdot y$. In a context related to Lie algebras, $x\cdot y$ is usually written as a bracket $[x,y]$. 

Unless when this would be confusing, we write $A$ for an algebra $(A,\cdot)$, dropping the multiplication $\cdot$.

\section{Examples}

In practice, additional conditions are imposed on the multiplication, and then only those algebras that satisfy these conditions are considered. For instance, we may ask that the multiplication of an algebra $A$ is commutative, which means that $xy=yx$ for all elements $x$, $y$ of $A$, and decide to study only such algebras. 

The word ``condition'' here means any set of equations that the multiplication on an algebra should satisfy. In this section we give a number of examples of such conditions. Later we make the concept of ``a condition'' itself more precise and investigate it in general. 

\begin{definition}
We write $\Alg_{\K}$ for the category of non-associative algebras over~$\K$, with linear maps $f\colon {A\to B}$ that preserve the multiplication:
\[
f(\lambda x)=\lambda f(x)
\text{,\quad}
f(x+y)=f(x)+f(y)
\text{\quad and \quad}
f(x\cdot y)=f(x)\cdot f(y)
\]
for $\lambda\in \K$ and $x$, $y\in A$. 
\end{definition}

Note that the identity function $1_A\colon {A\to A\colon x\mapsto x}$ on an algebra $A$ is always an algebra map, and such is the composite $g\circ f\colon {A\to C}$ of two algebra maps $f\colon {A\to B}$ and $g\colon{B\to C}$.

\begin{definition}
Given categories $\C$ and $\D$, if the objects and arrows of $\D$ are also objects and arrows in $\C$, and if the identities and the composition in $\D$ agree with those in $\C$, then $\D$ is called a \defn{subcategory} of $\C$. 

A subcategory $\D$ of $\C$ is a \defn{full subcategory} when the arrows in $\D$ are precisely the arrows in $\C$ between objects in $\D$: for each pair of objects $X$, $Y$ of $\D$, $\Hom_\D(X,Y)=\Hom_\C(X,Y)$.
\end{definition}

Note that a full subcategory of a given category $\C$ is completely determined by a choice of objects in $\C$. 

\begin{definition}\label{Def:Variety}
The collection of all $\K$-algebras satisfying a chosen equational condition is called a \defn{variety of non-associative algebras}. It is any class of algebras defined by a (possibly infinite) set of equations, considered as a full subcategory~$\V$ of~$\Alg_{\K}$. An object of $\V$ is often called a \defn{$\V$-algebra}.
\end{definition}

In other words, the morphisms in a variety of non-associative algebras are again the linear maps preserving the multiplication. In Sections~\ref{S8} and~\ref{S12} we will explain what we mean by ``equational condition'' in Definition~\ref{Def:Variety}. Essentially it will consist of a possibly infinite system of equations $\psi_\lambda=0$, $\lambda\in \Lambda$, where each $\psi_\lambda$ is a non-associative polynomial.

\begin{example}
An algebra is said to be \defn{commutative} if $xy=yx$ for all $x$ and $y$.
\end{example}

\begin{example}\label{Ex:Associative}
An algebra $A$ is \defn{associative} when $x(yz)=(xy)z$ for all elements $x$, $y$, $z$ of $A$. All associative algebras together form a variety of non-associative algebras, $\AssocAlg_{\K}$: so in these lectures, ``non-associative'' means ``not necessarily associative''.
\end{example}

\begin{remark}
It is easy to check that an associative $\Z$-algebra is the same thing as a ring (with or without unit). This doesn't fit our definition of a non-associative algebra though, because $\Z$ is not a field. We could have made Definition~\ref{Def:NAA} more general, so that this example could be included. We chose not to do this, because it would make other aspects of the theory significantly more complicated.
\end{remark}

\begin{exercise}
However, a lot can still be done. Throughout the text, try to extend the theory from vector spaces to arbitrary rings, and discover where this may make things more difficult.
\end{exercise}

Associativity may be weakened or modified, as in the next two examples.

\begin{example}
An \defn{alternative algebra} satisfies the equations $x(xy)=(xx)y$ and $(yx)x=y(xx)$. An \defn{anti-associative} algebra satisfies the equation $x(yz)=-(xy)z$.
\end{example}

We may also impose much stronger conditions, such as the next one.

\begin{example}[Vector spaces as non-associative algebras]\label{Ex:Abelian}
Any vector space $V$ may be considered as a 	non-associative algebra, by imposing a trivial multiplication: $xy=0$, the product of any $x$, $y\in V$ is the zero vector of $V$.

We write $\Vect_{\K}$ for the category of $\K$-vector spaces and linear maps between them. It is isomorphic to the variety $\AbAlg_{\K}$ of so-called \defn{abelian} non-associative algebras, which are those determined by the equation $xy=0$. This is of course not the same thing as the commutativity condition $xy=yx$ in Example~\ref{Ex:Associative}.

Indeed, if the functor that equips a vector space with the trivial multiplication is written $T\colon{\Vect_{\K}\to \AbAlg_{\K}}$, and $U\colon \AbAlg_{\K}\to \Vect_{\K}$ is the functor which forgets the multiplication of a trivial algebra, then clearly $T\circ U=1_{\AbAlg_{\K}}$ and $U\circ T=1_{\Vect_{\K}}$.
\end{example}

\emph{Lie algebras} (Example~\ref{Ex:Lie}) are of a wholly different nature. First of all, they are \emph{alternating}:

\begin{example}\label{Ex:Alt}
An algebra $A$ is called \defn{alternating} when $xx=0$ for every $x$ in $A$. It is said to be \defn{anti-commutative} when $xy=-yx$ for all $x$, $y\in A$. 
\end{example}

When it exists, the smallest positive integer $n$ such that
\[
n=\underbrace{1+\dots +1}_{\text{$n$ terms}}
\]
is zero in $\K$ is called the \defn{characteristic} of the field $\K$. If such a positive integer does not exist---which can only happen when $\K$ is infinite---then we say that the characteristic of $\K$ is $0$. If the characteristic of the field $\K$ is different from $2$, then these two conditions (being alternating, being anti-commutative) are equivalent. If $xx=0$ for every $x$ in $A$, then
\[
0=(a+b)(a+b)= aa+ab+ba+bb=ab+ba
\]
for all $a$, $b\in A$. So \emph{alternating implies anti-commutative}. Conversely, since we can take $x=y$, the equation $xy=-yx$ implies $xx=-xx$, hence $0=xx+xx=(1+1)xx=2xx$. So unless $0=2$ in the field~$\K$, this implies that $xx=0$. 

However, they are not equivalent in general: the simplest example of a field of characteristic $2$ is the field $\Z_2=\{0,1\}=\Z/2$ of integers modulo $2$. Over~$\Z_2$, the $2$-dimensional vector space with basis $\{x,y\}$ becomes an anti-commutative algebra which is not alternating if we define its multiplication as $xx=y$ and ${xy=yx=yy=0}$. Note that $xx=y=-y=-xx$.

\begin{example}\label{Ex:Lie}
The category $\Lie_{\K}$ of \defn{Lie algebras} over $\K$ consists of those alternating algebras satisfying the \defn{Jacobi identity}
\[
x(yz)+z(xy)+ y(zx)=0\text{.}
\] 

Lie algebras are notorious because of their connection with \emph{Lie groups}, which are smooth manifolds that carry a (compatible) group structure. Actually, each Lie group induces a Lie algebra over $\R$, and this process gives rise to a non-trivial equivalence of suitably chosen subcategories.

Another source of Lie algebras (over any field) are those coming from associative algebras. There is a functor $G\colon {\AssocAlg_\K\to \Lie_\K}$ which takes an associative algebra $(A,\cdot)$ and sends it to the couple $(A,[-,-])$ where
\[
[-,-]\colon A\times A\to A\colon (x,y)\mapsto [x,y]=xy-yx\text{.}
\]
It is easy to check (Exercise~\ref{Exer:LieStructure}) that this bracket does indeed define a Lie algebra structure on $A$. The functor $G$ sends a morphism of associative algebras to the same linear map, now a morphism of Lie algebras, since it automatically preserves the bracket.

Note that two elements $x$, $y$ of $(A,\cdot)$ commute ($xy=yx$) if and only if their bracket vanishes ($[x,y]=0$); so the associative algebra $(A,\cdot)$ is commutative if and only if the Lie algebra $(A,[-,-])$ is abelian.

The functor $G$ is not an equivalence of categories---the two types of structure are fundamentally different---but it has a left adjoint $\Lie_\K\to \AssocAlg_\K$ which is called the \emph{universal enveloping algebra} functor; see Definition~\ref{Def:Adj}.

A third example of Lie $\K$-algebra is, for any associative $\K$-algebra $A$, the \defn{Lie $\K$-algebra of derivations} $\Der_\K(A)$ of the $\K$-algebra $A$. A \defn{derivation} of a $\K$-algebra $A$ is any $\K$-linear mapping $D\colon A\to A$ such that $D(xy)=(D(x))y+x(D(y))$ for every $x$, $y\in A$. If $A$ is any associative $\K$-algebra and $D$, $D'$ are two derivations of~$A$, then $DD'-D'D$ is a derivation. Thus, for any associative $\K$-algebra $A$, it is possible define the Lie $\K$-algebra $\Der_\K(A)$ as the subset of $\End_\K(A)$ consisting of all derivations of $A$ with multiplication $[D,D']\coloneq DD'-D'D$ for every $D$, $D'\in\Der_\K(A)$.
\end{example}

\begin{exercise}
	Read about Lie groups and how they give rise to Lie algebras.
\end{exercise}

\begin{exercise}\label{Exer:LieStructure}
Show that the bracket above defines a Lie algebra structure on $A$. 
\end{exercise}

\begin{exercise}
	Investigate the universal enveloping algebra functor.
\end{exercise}

\begin{example}
Instead of being alternating, we may ask that the multiplication of an algebra satisfying the Jacobi identity is anti-commutative ($xy=-yx$). Then this algebra is called a \defn{quasi-Lie algebra}. The variety $\qLiek$ of quasi-Lie algebras coincides with $\Lie_{\K}$ as long as the characteristic of the field $\K$ is different from~$2$. However, when $\kar(\K)=2$, the variety $\Liek$ is strictly smaller than $\qLiek$: the algebra over $\Z_2$ given in Example~\ref{Ex:Alt} is a quasi-Lie algebra which is not Lie.

Approaches to Lie algebras via \emph{operads} usually deal with quasi-Lie algebras instead, because the repetition of the variable $x$ which occurs in the equation $xx=0$ cannot be expressed within that framework, so the equation $xy=-yx$ serves as a substitute.
\end{example}

We may further weaken or modify this definition as follows.

\begin{example}\label{Ex:Leibniz}
	A \defn{Leibniz algebra} is a non-associative algebra satisfying a variation on the Jacobi identity, namely $(xy)z=x(yz)+(xz)y$. It is easy to see that an anti-commutative algebra is a Leibniz algebra if and only if it is a quasi-Lie algebra.
\end{example}

\begin{example}
A \defn{Jordan algebra} is a commutative algebra ($xy=yx$) which satisfies the \defn{Jordan identity} $(xy)(xx)=x(y(xx))$. 

If a non-associative algebra is commutative and satisfies the Jacobi identity, then it is called a \defn{Jacobi--Jordan algebra}. Over a field of characteristic $2$, quasi-Lie algebras and Jacobi-Jordan algebras coincide (since commutative = anti-commutative). In particular then, they are Jordan algebras: indeed, the Jacobi identity implies that $3x(xx)=0$, so $x(xx)=0$; then via Example~\ref{Ex:Leibniz}, we see that $(xy)(xx)=x(y(xx))+(x(xx))y=x(y(xx))$.
\end{example}

Many more examples of varieties of non-associative algebras exist in the literature. We end with two extreme ones:

\begin{example}
The largest variety of non-associative $\K$-algebras is $\Alg_\K$ itself (no conditions) and the smallest one is the \defn{trivial variety} $0$ (consisting of the zero algebra only, satisfying all equations possible, including $x=0$).
\end{example}

\begin{example}
\defn{Unitary} (associative) algebras---those $(A,\cdot)$ which have an element $1$ for which $x\cdot 1=x=1\cdot x$---do not form a variety in our sense, since the \emph{existence} of $1$ cannot be expressed as an equational condition. This does not mean that an algebra cannot have a unit. On the other hand, even between algebras with units, \emph{a priori} there is no reason why a morphism of algebras should preserve this unit. 
\end{example}

The aim is now to explore some basic categorical concepts in the context of non-associative algebras. Most of what we are going to prove here may be seen as consequences of more general results, but to take that approach would defeat our purpose of keeping things simple. 

We work in a chosen variety of non-associative algebras~$\V$. The concepts we shall define do not depend on which variety we choose; here is a little example. Recall that a morphism $f\colon{A\to B}$ in a category is an \defn{isomorphism} if and only if there exists a morphism $g\colon{B\to A}$ such that $f\circ g=1_B$ and ${g\circ f=1_A}$. 

\begin{lemma}\label{Lem:Iso}
In a variety of non-associative algebras $\V$, a morphism is an isomorphism if and only if it is a bijection.
\end{lemma}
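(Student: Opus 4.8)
The plan is to prove the two implications separately, the forward one being essentially formal and the backward one requiring a short verification that the inverse function respects the algebra structure.

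First I would suppose $f\colon A\to B$ is an isomorphism in $\V$. By definition there is then a morphism $g\colon B\to A$ in $\V$ with $f\circ g=1_B$ and $g\circ f=1_A$. Reading these equalities of morphisms as equalities of the underlying functions---which is legitimate since morphisms in $\V$ are in particular functions and composition and identities in $\V$ are computed as for functions---we see that $g$ is a two-sided inverse for $f$ as a function, so $f$ is a bijection.

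Conversely, I would suppose $f\colon A\to B$ is a morphism in $\V$ whose underlying function is a bijection, and let $g\colon B\to A$ be the inverse function, so that $f\circ g=1_B$ and $g\circ f=1_A$ as functions. It remains to show $g$ is a morphism of $\V$. Since $\V$ is a full subcategory of $\Alg_\K$ and both $A$ and $B$ lie in $\V$, it is enough to check that $g$ is a morphism in $\Alg_\K$, i.e.\ that $g$ is $\K$-linear and preserves the multiplication. The verification is the same in each case: to check, for instance, $g(b\cdot b')=g(b)\cdot g(b')$ for $b$, $b'\in B$, apply the injective map $f$ to both sides. The left-hand side becomes $b\cdot b'$ because $f\circ g=1_B$; the right-hand side becomes $f(g(b))\cdot f(g(b'))=b\cdot b'$ because $f$ preserves multiplication and again $f\circ g=1_B$. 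Injectivity of $f$ then yields the identity, and the analogous computations starting from $f(\lambda b)=\lambda f(b)$ and $f(b+b')=f(b)+f(b')$ establish linearity of $g$.

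This gives both directions. There is no genuine obstacle here; the only points worth keeping in mind are that ``bijection'' should be read as a bijection of the underlying sets (equivalently, of the underlying vector spaces), and that the fullness of $\V$ in $\Alg_\K$ is precisely what allows us to conclude that the candidate inverse $g$, once shown to be an algebra map, is automatically a morphism of the variety $\V$.
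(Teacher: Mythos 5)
Your proposal is correct and follows essentially the same route as the paper: the forward direction is read off from the definition, and for the converse you verify that the inverse function preserves the operations by applying the injective morphism $f$ to both sides of each desired identity. The only (welcome) addition is your explicit remark that fullness of $\V$ in $\Alg_\K$ is what lets an algebra map between $\V$-objects count as a morphism of $\V$.
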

\begin{proof}
It follows immediately from the definition that any isomorphism of non-associative algebras is an isomorphism of its underlying sets, which makes it a bijection. Conversely, let $f\colon{A\to B}$ be a bijective morphism in $\V$. Then we need to show that the inverse function $g\colon{B\to A}$ is also a morphism in $\V$, i.e., a $\K$-algebra morphism. This is easy to see, using that $f$ is an injective morphism. For instance, $g(x\cdot y)=g(x)\cdot g(y)$ for any $x$, $y\in B$, because $f(g(x\cdot y))=x\cdot y=f(g(x))\cdot f(g(y))=f(g(x)\cdot g(y))$.
\end{proof}

\begin{remark}
Like several other results in these notes, this lemma is valid in the far more general context of a \emph{variety} in the sense of universal algebra. A precise definition of this concept may be found for instance in~\cite{MacLane}. A good exercise is to investigate which of our results generalise to that larger setting.
\end{remark}

\section{The zero algebra; kernels and cokernels}

Any variety of non-associative $\K$-algebras contains the zero-dimensional $\K$-vector space $0$ (whose unique element is also denoted $0$) as an object. Its multiplication is the unique map $\cdot\colon 0\times 0\to 0\colon (0,0)\mapsto 0\cdot 0=0$, which of course satisfies all possible equations. Categorically, this algebra is a \emph{zero object}, and its existence makes any variety of non-associative algebras into a \emph{pointed category}. 

\begin{definition}
An object $T$ is \defn{terminal} in a category $\C$ when for each object~$A$ of~$\C$ there is exactly one arrow $A\to T$ in $\C$. An object $I$ is \defn{initial} in $\C$ when for each object~$B$ of $\C$ there is exactly one arrow $I\to B$ in $\C$.

A \defn{zero object} or \defn{null object} in a category $\C$ is an object (denoted $0$) which is both initial and terminal. Given any two objects $A$ and $B$ of $\C$, there is a unique \defn{zero arrow} $0\colon A\to 0\to B$.
\end{definition}

When a category has a terminal (or an initial) object, this object is necessarily unique up to isomorphism: for any two terminal objects $T$ and $T'$ there are unique arrows $T\to T'$ and $T'\to T$, which are each other's inverse, because their composites ${T\to T'\to T}$ and ${T'\to T\to T'}$ are necessarily equal to the identity morphism on~$T$ and~$T'$, respectively.

\begin{example}
	In the category $\Set$ of sets and functions, the empty set is initial, and any one-element set is terminal. The same holds for topological spaces, when these sets are equipped with their unique topology. 
\end{example}

The zero algebra is a zero object in any variety of non-associative algebras, because the unique linear map to it or from it does automatically preserve the multiplication. Between any two algebras $A$ and $B$, the zero arrow is the morphism $A\to B\colon x\mapsto 0$.

\begin{proposition}
Any variety of non-associative algebras is a pointed category, where the zero object is the zero algebra.
\end{proposition}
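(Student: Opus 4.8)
The plan is to unpack the definition of \emph{pointed category}: a category is pointed precisely when it has a zero object, i.e.\ an object which is simultaneously initial and terminal. So the proof reduces to verifying that the zero algebra~$0$, already identified in the text as an object of any variety~$\V$, is both initial and terminal in~$\V$.

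First I would check terminality. Given any $\V$-algebra~$A$, the only set-theoretic map $A\to 0$ is the constant map sending every $x\in A$ to the unique element $0$. I would observe that this map is automatically $\K$-linear (it is the zero linear map) and automatically multiplicative, since both $f(x\cdot y)$ and $f(x)\cdot f(y)$ equal $0$, the only element of the codomain. Hence it is a morphism in~$\V$, and as it is the unique map of underlying sets, it is the unique morphism $A\to 0$ in~$\V$. Dually, for initiality, given any $\V$-algebra~$B$, the only map $0\to B$ is the one sending $0$ to $0_B$; it is linear (linear maps preserve zero) and it preserves multiplication since $0\cdot 0=0$ on the left and $0_B\cdot 0_B=0_B$ on the right. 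Again this exhausts all set maps $0\to B$, so it is the unique morphism in~$\V$. Therefore $0$ is a zero object, and $\V$ is pointed, with zero arrow $A\to 0\to B$ given by $x\mapsto 0$, as already noted in the text.

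There is really no main obstacle here; the statement is essentially a repackaging of remarks made just before it. The only point that deserves a word of care is that ``$\V$ contains the zero algebra'' must be invoked — this is exactly where the hypothesis that $\V$ is a variety (closed under the relevant constructions, and in particular containing $0$, which satisfies every equation) enters. Once that is granted, uniqueness of the morphisms to and from~$0$ follows from uniqueness at the level of underlying sets together with the (trivial) verification that the forced set maps are in fact $\V$-morphisms.
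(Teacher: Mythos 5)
Your proposal is correct and follows exactly the argument the paper gives in the paragraph preceding the proposition: the zero algebra lies in every variety since it satisfies all equations, and the unique linear map to or from it automatically preserves the multiplication, so it is both initial and terminal. Nothing to add.
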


The context of a pointed category is the right categorical environment for the definition of the concepts of the \emph{kernel} and \emph{cokernel} of a morphism.

\begin{definition}
In a pointed category $\C$, an arrow $k\colon {K\to A}$ is a \defn{kernel} of an arrow $f\colon {A\to B}$ when $f\circ k=0$, and every other arrow $h\colon {C\to A}$ such that $f\circ h=0$ factors uniquely through $k$ via a morphism $h'\colon {C\to K}$ such that $k\circ h'=h$.
\[
\xymatrix@R=1ex{K \ar[rd]^-k \\
& A \ar[r]^-f & B\\
C \ar[ru]_-{\forall h} \ar@{-->}[uu]^-{\exists !h'} }
\] 
\end{definition}

In other words, in the category where an object is an arrow $h\colon{C\to A}$ such that $f\circ h=0$ and a morphism $g\colon h\to i$ between such arrows is a commutative triangle 
\[
\xymatrix@R=1ex{D \ar[rd]^-i \\
& A \ar[r]^-f & B\text{,}\\
C \ar[ru]_-{h} \ar@{->}[uu]^-{g} }
\] 
a kernel of $f$ is a terminal object. Hence kernels are unique up to isomorphism. Because of this, when a kernel exists, we sometimes speak about ``the kernel'' (or ``the cokernel'') of a morphism.

In a variety of non-associative algebras, for any arrow $f\colon {A\to B}$ there exists a kernel $k\colon {K\to A}$, namely its kernel in the vector space sense. Explicitly, the algebra $K$ may be obtained as $\{x\in A\mid f(x)=0\}$ with the induced operations, and $k\colon {K\to A}$ as the canonical inclusion. Since $f$ is a morphism of algebras, for $x$, $y\in K$ and $\lambda \in \K$ we have $f(x+y)=f(x)+f(y)=0$, $f(\lambda x)=\lambda f(x)=0$ and $f(x\cdot y)=f(x)\cdot f(y)=0$, so that $K$ is a $\K$-algebra. If now $h\colon C\to A$ is a morphism such that $f\circ h=0$, then we must define $h'\colon C\to K$ as the map which sends $x\in C$ to $h(x)\in K$---note that $f(h(x))=0$. This is the only function which makes the triangle commute, and it is a morphism, because $h$ is a morphism.

\begin{proposition}
A kernel of a morphism of non-associative algebras is computed as the kernel of the underlying linear map.
\end{proposition}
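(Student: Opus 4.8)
The plan is to verify directly that the vector-space kernel of the underlying linear map of $f$ carries a canonical $\V$-algebra structure for which the subspace inclusion is a kernel of $f$ in $\V$; since kernels are unique up to isomorphism, this will identify every kernel of $f$ with the linear one. Concretely, given a morphism $f\colon A\to B$ in $\V$, I would set $K$ to be the subset $\{x\in A\mid f(x)=0\}$ and let $k\colon K\to A$ be the inclusion function.

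First I would check that $K$ is a subalgebra of $A$ lying in $\V$. Because $f$ is linear, $K$ is a linear subspace of $A$; because $f$ preserves multiplication, $f(x\cdot y)=f(x)\cdot f(y)=0$ whenever $x$, $y\in K$, so $K$ is closed under the multiplication of $A$. Equipping $K$ with the restricted operations thus makes it a non-associative algebra, and since every element of $K$ is in particular an element of $A$, any equational condition that holds identically on $A$ also holds identically on $K$; hence $K\in\V$ and $k$ is a morphism of $\V$. Next I would establish the universal property: clearly $f\circ k=0$, and given any $h\colon C\to A$ in $\V$ with $f\circ h=0$, each value $h(c)$ satisfies $f(h(c))=0$ and so lies in $K$, whence $h$ corestricts to a map $h'\colon C\to K$ with $k\circ h'=h$. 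This $h'$ is a morphism of $\V$ because $h$ is and the operations on $K$ are the restrictions of those on $A$, and it is the unique such factorisation because $k$, being injective, is a monomorphism. Therefore $k\colon K\to A$ is a kernel of $f$ in $\V$.

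Finally, applying the functor that forgets the multiplication, the underlying linear map of the kernel just constructed is exactly the subspace inclusion $\{x\in A\mid f(x)=0\}\hookrightarrow A$, which is precisely the kernel of the underlying linear map of $f$ in $\Vect_\K$. Since kernels in $\V$ are unique up to isomorphism (being terminal objects in the appropriate comma-type category, as noted after the definition of kernel), every kernel of $f$ in $\V$ has underlying linear map isomorphic to this one, which is the assertion. I do not expect a genuine obstacle here; the only point deserving a second's attention is that $K$ really does belong to the chosen variety $\V$, and this is safe because passing to a subalgebra can never invalidate an equation that was already satisfied by all elements.
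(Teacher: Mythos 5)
Your proposal is correct and follows essentially the same route as the paper: take $K=\{x\in A\mid f(x)=0\}$ with the induced operations, check it is a subalgebra (hence a $\V$-algebra, since subalgebras inherit all identities), and verify the universal property by corestricting any $h$ with $f\circ h=0$. The only difference is presentational: you spell out explicitly that $K$ lies in $\V$ and that uniqueness of the factorisation follows from injectivity of $k$, points the paper handles more briefly or has established earlier.
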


Reversing the arrows, we find the definition of a cokernel:

\begin{definition}\label{Def:Cokernel}
In a pointed category $\C$, an arrow $q\colon {B\to Q}$ is a \defn{cokernel} of an arrow $f\colon {A\to B}$ when $q\circ f=0$, and every other arrow $h\colon {B\to C}$ such that $h\circ f=0$ factors uniquely through $q$ via a morphism $h'\colon {Q\to C}$ such that $h'\circ q=h$.
\[
\xymatrix@R=1ex{&& Q \\
A \ar[r]^-f & B \ar[ru]^-q \ar[rd]_-{\forall h} \\
&& C \ar@{<--}[uu]_-{\exists !h'} }
\] 
\end{definition}

In a variety of non-associative algebras, any arrow has a cokernel, but its construction is slightly more complicated and needs some preliminary work.

\section{Kernels and ideals, cokernels and quotients}

In varieties of non-associative algebras, arrows which are kernels can be characterised as \emph{ideals}. An ideal of an algebra is like an ideal of a ring, or a normal subgroup of a group, both of which admit a similar categorical characterisation. This is the first place where we see that the added multiplication which distinguishes an algebra from a mere vector space makes an actual difference: \emph{not every subalgebra can occur as a kernel}.

\begin{definition}\label{Def:Ideal}
Given an algebra $A$, a \defn{subalgebra} of $A$ is a subspace $S$ of~$A$ which is closed under multiplication, so $SS\subseteq S$. 

An \defn{ideal} $I$ of $A$ is a subalgebra such that $AI\subseteq I\supseteq IA$: for all $a\in A$ and $x\in I$, the products $ax$ and $xa$ in $A$ are still elements of $I$. 
\end{definition}

For a given algebra in $\V$, all of its subalgebras are $\V$-algebras as well, since the multiplication on $A$ restricts to a multiplication on $S$, which of course makes the same equations hold. In fact, a subset $S\subseteq A$ is a subalgebra precisely when the canonical injection $s\colon {S\to A}$ is a morphism in $\V$.

\begin{example}\label{Ex:AssocPoly}
Write $\K\langle x\rangle$ for the set of associative polynomials with zero constant term in $x$, which is the $\K$-vector space with basis $\{x,x^2,\dots,x^n,\dots\}$, equipped with the obvious associative multiplication determined by $x^mx^n=x^{m+n}$. Then the vector space generated by the even degrees $\{x^{2k}\mid k\geq 1\}$ of $x$ forms a subalgebra of $\K\langle x\rangle$ which is not an ideal. 
\end{example}

There is a one-to-one correspondence between the set of all ideals of $A$ and the set of all equivalence relations $\sim$ on $A$ compatible with the operations of $A$, that is, such that, for every $a$, $b$, $c$, $d\in A$ and every $\lambda\in\K$, $a\sim b$ and $c\sim d$ implies $a+c\sim b+d$, $ac\sim bd$ and $\lambda a\sim \lambda b$. The following result explains this in detail.

\begin{proposition}\label{Prop:Quotient-of-Ideal}
For a subalgebra $K$ of an algebra $A$, let $k\colon {K\to A}$ denote the canonical inclusion. Let $q\colon {A\to A/K}\colon a\mapsto a+K$ denote the canonical linear map to the quotient vector space $A/K=\{a+K\mid a\in A\}$.

The following conditions are equivalent:
\begin{enumerate}
\item $K$ is an ideal;
\item there is a unique algebra structure on $A/K$ for which $q\colon {A\to A/K}$ becomes a morphism of algebras; then $k$ is the kernel of $q$, and $q$ is the cokernel of~$k$;
\item $k$ is the kernel of some morphism of algebras $f\colon {A\to B}$;
\item $K$ is the equivalence class $[0]_\sim$ of the zero of $A$ modulo some equivalence relation $\sim$ compatible with the operations of the algebra $A$.
\end{enumerate}
A map $k\colon {K\to A}$ satisfying these conditions is called a \defn{normal monomorphism}.
\end{proposition}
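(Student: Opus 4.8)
The plan is to prove the cyclic chain of implications $(1)\Rightarrow(2)\Rightarrow(3)\Rightarrow(4)\Rightarrow(1)$; with this ordering the hypotheses line up conveniently and each individual step stays short.

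For $(1)\Rightarrow(2)$, assume $K$ is an ideal. I would define a multiplication on the quotient vector space $A/K$ by declaring $(a+K)(b+K)= ab+K$, and check that this is well defined: if $a-a'\in K$ and $b-b'\in K$, then
\[
ab-a'b'=(a-a')b+a'(b-b')\in KA+AK\subseteq K\text{,}
\]
which is exactly where the defining inclusions $AI\subseteq I\supseteq IA$ of an ideal get used. By construction $q$ is then an algebra morphism, and it is the only multiplication making $q$ one, since $q$ is surjective. That $A/K$ again lies in $\V$ follows because any equation valid in $A$ is transported along the surjection $q$ onto $A/K$ (evaluate the non-associative polynomials on representatives and use that $q$ is a homomorphism; the precise meaning of ``equational condition'' is the subject of Sections~\ref{S8} and~\ref{S12}). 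The remaining claims, that $k=\ker q$ and $q=\operatorname{coker}k$, are the vector-space facts recalled just before the statement, upgraded to algebras: in each case the unique comparison map $h'$ is automatically multiplicative because $h$ is.

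The implication $(2)\Rightarrow(3)$ is immediate: take $f\coloneq q\colon A\to A/K$. For $(3)\Rightarrow(4)$, suppose $k=\ker f$ for some $f\colon A\to B$; since kernels of algebra morphisms are computed as vector-space kernels, $K=\{x\in A\mid f(x)=0\}$. I would put $a\sim b$ whenever $f(a)=f(b)$; this is an equivalence relation compatible with all the operations precisely because $f$ preserves $+$, scalar multiples and $\cdot$, and its class of the zero element is $\{a\mid f(a)=f(0)=0\}=K$. Finally, for $(4)\Rightarrow(1)$, suppose $K=[0]_\sim$ for a compatible equivalence relation $\sim$. Bilinearity gives $0\cdot 0=0$ and $a\cdot 0=0=0\cdot a$ for every $a$, so compatibility of $\sim$ yields: $K$ is a subspace (from $a,b\sim 0$ one gets $a+b\sim 0$ and $\lambda a\sim 0$), it is closed under multiplication (from $a,b\sim 0$ one gets $ab\sim 0$), and for $a\in A$ and $x\in K$ one has $ax\sim a\cdot 0=0$ and $xa\sim 0\cdot a=0$, so that $AK\subseteq K\supseteq KA$, i.e.\ $K$ is an ideal.

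I expect the only real friction to be inside $(1)\Rightarrow(2)$: besides the (routine) well-definedness computation, there is the more delicate assertion that the quotient still satisfies whatever equations cut out $\V$. That is genuinely a statement about homomorphic images of algebras in a variety; I would either invoke the standard fact directly or defer its justification to the later sections where ``equational condition'' is made precise. Everything else is routine diagram-chasing and bookkeeping, largely re-using the kernel and cokernel descriptions already given in the excerpt.
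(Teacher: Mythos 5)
Your proof is correct and follows essentially the same route as the paper: the quotient multiplication $(a+K)(b+K)=ab+K$ with well-definedness via the ideal property, taking $f=q$ for $(2)\Rightarrow(3)$, the relation $a\sim b\Leftrightarrow f(a)=f(b)$, and compatibility of $\sim$ forcing $AK\subseteq K\supseteq KA$. The only difference is bookkeeping --- you prove the single cycle $(1)\Rightarrow(2)\Rightarrow(3)\Rightarrow(4)\Rightarrow(1)$ while the paper proves $(3)\Rightarrow(1)$ and $(4)\Rightarrow(1)$ directly --- and your $(4)\Rightarrow(1)$ is in fact slightly more complete, since you verify the full ideal condition $ax\sim a\cdot 0=0$ rather than only closure of $K$ under its own operations.
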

\begin{proof}
3.\ implies 1.: if $k$ is a kernel of $f$ then for all $x\in K$ and $a\in A$ we have $f(xa)=f(x)f(a)=0f(a)=0$. Likewise, $f(ax)=0$, so that $xa$ and $ax$ are in $K$. Hence $K$ is an ideal of $A$. 

4.\ implies 1.\ because if $a\sim0$, $b\sim0$ and $\lambda\in\K$ then $a+b\sim0+0=0$, $ab\sim00=0$ and $\lambda a\sim\lambda0=0$.

Clearly, 2.\ implies 3.; let us prove that 1.\ implies 2.. For $q$ to become a morphism of algebras, we have no choice but to put $(a+K)\cdot (b+K)=ab+K$ for $a$, $b\in A$. Clearly then, $k$ will be the kernel of $q$. 

We need to prove that this multiplication on $A/K$ is indeed a $\K$-algebra structure. First of all, it is well defined: if $a+K=a'+K$, then
\[
(a+K)\cdot (b+K)=ab+K=ab+(a'-a)b+K=a'b+K=(a'+K)\cdot (b+K)\text{,}
\]
since $(a'-a)b\in KA$ is in the ideal $K$. Similarly, the multiplication does not depend on the chosen representative in the second variable. Its bilinearity follows immediately from the bilinearity of the multiplication on $A$. Finally, any equation which holds in $A$ also holds in $A/K$, so the $\K$-algebra $A/K$ is an object of $\V$.

Let us now check that the morphism $q\colon {A\to A/K}$ is indeed the cokernel of $k$. Suppose $h\colon {A\to C}$ is a morphism such that $h\circ k=0$. Then we have no choice but to impose that $h'\colon {A/K\to C}$ takes a class $a+K$ and sends it to $h(a)$. The question is, whether this defines a morphism of algebras. First of all, the choice of representatives plays no role, since $a+K=a'+K$ implies $a-a'\in K$, so that $h(a)-h(a')=h(a-a')=0$. The other properties follow easily.

Finally, 3.\ implies 4.: we let $a\sim b$ when $f(a)=f(b)$; then it is easily seen that $\sim$ is compatible with the operations of $A$. Furthermore, $a\sim 0$ if and only if $a$ is in the kernel $K$ of $f$.
\end{proof}

In other words, \emph{ideals have quotients}, \emph{kernels have cokernels}. What about the cokernel of an arbitrary morphism of algebras? For this we need a description of the \defn{ideal of $A$ generated by a subset $S\subseteq A$}, which is the smallest ideal of the algebra $A$ that contains $S$.

\begin{proposition}\label{Prop:Generated-Ideal}
Given any subset $S$ of an algebra $A$, the ideal $I$ of $A$ generated by $S$ exists, and may be obtained as follows:
\begin{enumerate}
	\item $I$ is the intersection of all ideals of $A$ that contain $S$;
	\item $I$ is the union of the sequence of subspaces $I_n$ of $A$, obtained inductively as follows:
\begin{itemize}
	\item $I_0$ is the subspace of $A$, generated by $S$;
	\item $I_{n+1}$ is the subspace of $A$, generated by $I_n$, $AI_n$ and $I_nA$.
\end{itemize} 
\end{enumerate}
In other words, each element of $I$ can be obtained as a linear combination of elements of $S$ and products of elements $S$ with elements of~$A$.
\end{proposition}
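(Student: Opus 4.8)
The plan is to prove Proposition~\ref{Prop:Generated-Ideal} by establishing that both descriptions~(1) and~(2) yield the smallest ideal of $A$ containing $S$, and hence coincide. For~(1), I would first note that an arbitrary intersection of ideals of $A$ is again an ideal: if $(I_\lambda)_{\lambda\in\Lambda}$ is a family of ideals, then $\bigcap_\lambda I_\lambda$ is a subspace (being an intersection of subspaces), and for $a\in A$ and $x\in\bigcap_\lambda I_\lambda$ we have $ax,xa\in I_\lambda$ for every $\lambda$, hence $ax,xa\in\bigcap_\lambda I_\lambda$. Since the family of ideals containing $S$ is non-empty (it contains $A$ itself), the intersection $I$ is a well-defined ideal, it contains $S$, and by construction it is contained in every ideal containing $S$; so it is the smallest such, which is exactly what ``the ideal generated by $S$'' means.

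For~(2), the strategy is to show that the union $J\coloneqq\bigcup_{n\ge 0}I_n$ is an ideal containing $S$, and that it is contained in every ideal containing $S$; it then follows by the uniqueness of smallest elements that $J=I$. First I would observe that the $I_n$ form an increasing chain: $I_n\subseteq I_{n+1}$ because $I_{n+1}$ is generated by a collection of subsets that includes $I_n$. Consequently $J$ is a subspace, since any two of its elements lie in a common $I_n$ (the larger of their indices) and $I_n$ is a subspace, closed under the linear operations. To see $J$ is an ideal, take $a\in A$ and $x\in J$; then $x\in I_n$ for some $n$, so $ax\in AI_n$ and $xa\in I_nA$, both of which are contained in the subspace $I_{n+1}\subseteq J$ by the inductive definition. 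Finally, $S\subseteq I_0\subseteq J$. Conversely, if $I'$ is any ideal containing $S$, an easy induction on $n$ shows $I_n\subseteq I'$: the base case is $I_0\subseteq I'$ because $I'$ is a subspace containing $S$; for the inductive step, $I_n\subseteq I'$ gives $AI_n\subseteq AI'\subseteq I'$ and $I_nA\subseteq I'A\subseteq I'$ since $I'$ is an ideal, and then $I_{n+1}\subseteq I'$ because $I'$ is a subspace containing all three generating sets. Hence $J\subseteq I'$, and taking $I'=I$ together with $I\subseteq J$ (as $J$ is an ideal containing $S$) yields $I=J$.

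The concluding ``in other words'' sentence is then just a reformulation of the construction in~(2): unwinding the recursion, $I_0$ consists of linear combinations of elements of $S$, and each subsequent $I_{n+1}$ adjoins one-sided products of previously obtained elements with elements of $A$, so every element of $J=\bigcup_n I_n$ is indeed a linear combination of elements of $S$ and of iterated left/right products of such with elements of $A$. I would state this informally rather than setting up heavy notation for the bracketing patterns. The only mildly delicate point---and the one I expect to be the main obstacle to a fully rigorous write-up---is the well-definedness and subspace property of the union $J$, which relies on the chain $I_0\subseteq I_1\subseteq\cdots$ being increasing; once that monotonicity is explicitly recorded, everything else is a routine verification using ``subspace closed under the linear operations'' and the defining inclusions $AJ\subseteq J\supseteq JA$. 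No appeal to the type of variety $\V$ is needed, since subalgebras and ideals are purely about the linear and multiplicative structure.
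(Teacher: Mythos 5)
Your proposal is correct and follows essentially the same route as the paper's proof: the intersection description is verified by showing the family of ideals containing $S$ is non-empty and closed under intersection, and the union $\bigcup_n I_n$ is shown to be an ideal containing $S$ that sits inside every such ideal. You merely spell out in more detail the points the paper leaves as ``easily seen,'' namely the monotonicity of the chain $I_0\subseteq I_1\subseteq\cdots$ and the induction showing $I_n\subseteq I'$ for any ideal $I'$ containing $S$.
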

\begin{proof}
For the proof of 1., consider the set of ideals 
\[
\J=\{J\subseteq A\mid\text{$J$ is an ideal of $A$ containing $S$}\}\text{.}
\]
This set is non-empty because it contains $A$. Any intersection of a set of subspaces of a vector space is still a subspace. If those subspaces happen to be ideals, then the result is still an ideal, since for $x\in \bigcap \J$ and $a\in A$, the products $ax$ and $xa$ are elements of all members of the set $\J$, so they are in its intersection. It follows that $\bigcap \J$ is itself an element of $\J$, and clearly it is the smallest element.

For 2.\ we have to prove that the subset $L=\bigcup_{n\in \N} I_n$ of~$A$ is an element of $\J$. Note that the elements of $I_{n+1}$ are linear combinations of products of elements of the form $ax$, $xa$ and $x$, where $a\in A$ and $x\in I_n$. It is easily seen that this set $L$ is still a subspace of $A$, and it is also obvious that $L$ is an ideal. Note that \emph{any} ideal of $A$ that contains $S$ necessarily also contains the other elements of $L$, so $I$ and $L$ must coincide. 
\end{proof}

Note that the ideal generated by a subset $S$ of an algebra $A$ may be obtained as the smallest ideal that contains the subspace of $A$ spanned by~$S$.

\begin{proposition}\label{Prop:Cokernel}
In any variety of non-associative algebras, given a morphism $f\colon {A\to B}$, a cokernel $q\colon {B\to Q}$ of it exists, and may be obtained as follows:
\begin{enumerate}
	\item take the image $f(A)=\{f(a)\mid a\in A\}$, this is a subalgebra of $B$;
	\item take the smallest ideal $I$ of $B$ containing $f(A)$;
	\item let $q$ be the quotient $B\to B/I$.
\end{enumerate}
\end{proposition}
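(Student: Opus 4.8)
The plan is to verify that the object $Q = B/I$ together with the canonical quotient map $q\colon B \to B/I$ satisfies the universal property of the cokernel of $f$ stated in Definition~\ref{Def:Cokernel}. First I would check that each of the three steps is legitimate: step~(1) is immediate, since $f$ is a morphism of algebras, so $f(A)$ is closed under the multiplication of $B$ and under the linear structure, hence a subalgebra; step~(2) is exactly Proposition~\ref{Prop:Generated-Ideal} applied to the subset $f(A) \subseteq B$; and step~(3) makes sense because $I$ is an ideal, so by Proposition~\ref{Prop:Quotient-of-Ideal} there is a unique algebra structure on $B/I$ making $q$ a morphism of algebras, and moreover $q$ is the cokernel of the inclusion $k\colon I \to B$.

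Next I would establish that $q \circ f = 0$. Since $f(A) \subseteq I$, every element $f(a)$ maps to $f(a) + I = 0$ in $B/I$, so indeed $q \circ f$ is the zero morphism. Then comes the factorisation property: given any morphism $h\colon B \to C$ with $h \circ f = 0$, I must produce a unique $h'\colon B/I \to C$ with $h' \circ q = h$. The condition $h \circ f = 0$ says that $f(A) \subseteq \Ker(h)$. The key observation is that $\Ker(h)$ is an ideal of $B$ (by Proposition~\ref{Prop:Quotient-of-Ideal}, 3.\ implies 1.), and since $I$ is the \emph{smallest} ideal containing $f(A)$, we get $I \subseteq \Ker(h)$. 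Therefore $h$ factors through the quotient: because $q$ is the cokernel of $k\colon I \to B$, and $h \circ k = 0$ since $I \subseteq \Ker(h)$, the universal property of that cokernel furnishes a unique algebra morphism $h'\colon B/I \to C$ with $h' \circ q = h$.

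The main point to get right — and the only place where there is anything to prove beyond bookkeeping — is the reduction in the previous paragraph: one must notice that $I \subseteq \Ker(h)$ is forced by minimality of $I$, so that the cokernel of $f$ is really just the cokernel of the \emph{ideal generated by the image} of $f$. Once that is in place, the uniqueness of $h'$ is automatic from the uniqueness clause in the universal property of $q$ as cokernel of $k$, so no separate argument is needed. I would conclude by remarking that, concretely, $h'$ sends a class $b + I$ to $h(b)$, which is well defined precisely because $I \subseteq \Ker(h)$; this recovers the familiar description and closes the verification.
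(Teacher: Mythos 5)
Your proof is correct, and it follows the same overall reduction as the paper: both arguments observe that $q$ is already known to be the cokernel of the inclusion $k\colon I\to B$, and then show that $h\circ f=0$ forces $h\circ k=0$ so that the two cokernel properties coincide. Where you differ is in how that one nontrivial implication is established. The paper uses the \emph{constructive} description of the generated ideal (item 2 of Proposition~\ref{Prop:Generated-Ideal}): every element of $I$ is a linear combination of elements of $f(A)$ and of products of such elements with elements of $B$, and $h$, being an algebra morphism, kills all of these. You instead use only the \emph{minimality} characterisation (item 1 of Proposition~\ref{Prop:Generated-Ideal}) together with the fact that $\Ker(h)$ is an ideal (the implication 3.\ $\Rightarrow$ 1.\ of Proposition~\ref{Prop:Quotient-of-Ideal}): since $\Ker(h)$ is an ideal containing $f(A)$, it must contain $I$. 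Your route is slightly more conceptual and bypasses the explicit description of the generated ideal entirely, at the cost of invoking the kernels-are-ideals characterisation; the paper's route is more hands-on and foreshadows the substitution arguments used later (e.g.\ in Lemma~\ref{Lem:Generated-Ideal} and Proposition~\ref{Prop:Existence-Coequalisers}). Both are complete; your handling of the uniqueness of $h'$ via the uniqueness clause for the cokernel of $k$ is also fine.
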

\begin{proof}
Propositions~\ref{Prop:Quotient-of-Ideal} and~\ref{Prop:Generated-Ideal} already tell us that this procedure can indeed be carried out. Actually, 1.\ needs to be checked separately, but this is easy. We only have to show that the result $q$ is a cokernel of $f$.

Since we know that $q$ is a cokernel of the canonical inclusion $i\colon{I\to B}$, we only need to prove that carries out the same role for $f$. This amounts to showing that for any morphism $h\colon{B\to C}$, we have $h\circ f=0$ if and only if $h\circ i=0$. Note that since $f(A)\subseteq I$, there exists $f'\colon A\to I\colon a\mapsto f(a)$ such that $i\circ f'=f$. Hence $h\circ i=0$ implies $h\circ f=h\circ i\circ f'=0$. For the converse, we know that $h$ vanishes on all elements of $I$ of the form $f(a)$ for $a\in A$, and need to extend this to all of~$I$. This is clear however, since each element of $I$ is linear combination of elements of~$f(A)$ and products of elements $f(A)$ with elements of~$B$.
\end{proof}

\section{Short exact sequences and protomodularity}
One of the key notions of Homological Algebra is the concept of a \emph{(short) exact sequence}. It can be defined in any pointed category which has kernels and cokernels, so in particular in all varieties of non-associative algebras. As it turns out, here the concept is particularly well behaved, since the \emph{Split Short Five Lemma} holds. This implies that a variety of non-associative algebras is always a \emph{protomodular category}---a central notion in Categorical Algebra, and part of the definition of a \emph{semi-abelian} category.

\begin{definition}\label{Def:SES}
In a pointed category, a \defn{short exact sequence} is a couple of composable morphisms $(f\colon {A\to B},g\colon {B\to C})$ where $f$ is a kernel of $g$ and $g$ is a cokernel of $f$. 
\end{definition}

This situation is usually pictured as a sequence
\begin{equation}\label{SES}
	\xymatrix{0 \ar[r] & A \ar[r]^-f & B \ar[r]^g & C \ar[r] & 0\text{.}}\tag{$\star$}
\end{equation}
Knowing that a couple of composable morphisms is a short exact sequence encodes certain information about the objects involved. This is precisely the type of information that is dealt with by Homological Algebra. 

In a category of vector spaces, for instance, the exactness of this sequence not only says that $C\cong B/A$; it also implies that $B$ is isomorphic to the direct sum~${A\oplus C}$: up to isomorphism, the outer objects completely determine the middle one. This is a consequence of the next result (Theorem~\ref{Thm:Proto}), which is valid in any variety of non-associative algebras. We first need a definition.

\begin{definition}
A morphism $f\colon A\to B$ in a category $\C$ is said to be a \defn{split epimorphism} when there exists a morphism $s\colon {B\to A}$ such that $f\circ s=1_B$. 

Dually, a morphism $s\colon B\to A$ in $\C$ is called a \defn{split monomorphism} when there exists a morphism $f\colon {A\to B}$ such that $f\circ s=1_B$. 
\end{definition}

As we see, split epimorphisms and split monomorphisms occur together: the splitting $s$ of a split epimorphism $f$ is a split monomorphism, and vice versa. 

\begin{example}
In $\Set$, any injection $s\colon {B\to A}$ where $B\neq \emptyset$ is a split monomorphism. The statement that ``any surjection $f\colon {A\to B}$ is a split epimorphism'' is equivalent to the Axiom of Choice, which allows us to define $s\colon {B\to A}$ by picking, for each $b\in B$, an element $a$ in $f^{-1}(b)$, and calling this $s(b)$.
\end{example}

\begin{example}
Under the the Axiom of Choice, every vector space has a basis. Then in $\Vect_\K$, every surjective linear map $g\colon {B\to C}$ is a split epimorphism. A~splitting $s\colon C\to B$ for $g$ may be defined as follows. Let $Y$ be a basis of $C$. Then the restriction of $g$ to a function $g^{-1}(Y)\to Y$ is a surjection. Hence it is a split epimorphism. Let $t\colon Y\to B$ denote a splitting, viewed as a function with codomain~$B$. Since~$Y$ is a basis of $C$, the function $t$ extends to a linear map $s\colon {C\to B}$.
\end{example}

Outside these two examples, split monomorphisms and split epimorphisms tend to be quite scarce. For a given morphism of non-associative algebras, to belong to one of those classes is a strong condition with serious consequences. 

\begin{theorem}[Split Short Five Lemma]\label{Thm:Proto}
In a variety of non-associative algebras, consider the diagram
\[
\xymatrix{A \ar[d]_-\alpha \ar[r]^-f & B \ar[d]^-\beta \ar@<.5ex>[r]^-g & C \ar@<.5ex>[l]^-s \ar[d]^-\gamma\\
D \ar[r]_-k & E \ar@<.5ex>[r]^-q & F \ar@<.5ex>[l]^-t}
\]
where $f$ is a kernel of $g$ and $k$ is a kernel of $q$, where $g\circ s=1_C$ and $q\circ t=1_F$, and where the three squares commute: $\beta\circ f=k\circ \alpha$, $q\circ \beta=\gamma\circ g$ and $\beta\circ s=t\circ \gamma$. If $\alpha$ and~$\gamma$ are isomorphisms, then $\beta$ is an isomorphism as well.
\end{theorem}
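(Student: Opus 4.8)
The plan is to prove this by a direct diagram chase, exploiting the fact that kernels of algebra morphisms are computed on underlying sets (as established earlier) and that a bijective morphism is automatically an isomorphism by Lemma~\ref{Lem:Iso}. So the entire task reduces to showing that $\beta$ is a bijection; once we know that, Lemma~\ref{Lem:Iso} gives us the inverse algebra morphism for free, and we never have to verify multiplicativity of anything by hand.

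First I would prove that $\beta$ is injective. Suppose $\beta(b)=0$ for some $b\in B$. Applying $q$ and using $q\circ\beta=\gamma\circ g$ gives $\gamma(g(b))=0$, and since $\gamma$ is injective, $g(b)=0$, so $b$ lies in the kernel of $g$, which is (the image of) $f$. Write $b=f(a)$ for a unique $a\in A$. Then $0=\beta(f(a))=k(\alpha(a))$; since $k$ is a kernel it is in particular a monomorphism (its underlying map is the inclusion of a subspace), and $\alpha$ is injective, so $a=0$, hence $b=f(a)=0$.

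Next I would prove that $\beta$ is surjective. Take any $e\in E$. Since $\gamma$ is surjective, there is $c\in C$ with $\gamma(c)=q(e)$. Consider $e'=e-\beta(s(c))$. Applying $q$ and using $q\circ\beta=\gamma\circ g$, $g\circ s=1_C$, and $\beta\circ s = t\circ\gamma$ (or simply $q\circ\beta\circ s = \gamma\circ g\circ s=\gamma$), we get $q(e')=q(e)-\gamma(g(s(c)))=q(e)-\gamma(c)=0$, so $e'$ lies in the kernel of $q$, i.e.\ $e'=k(d)$ for some $d\in D$. Since $\alpha$ is surjective, $d=\alpha(a)$ for some $a\in A$, and then $\beta(f(a))=k(\alpha(a))=k(d)=e'$. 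Therefore $e=\beta(f(a))+\beta(s(c))=\beta(f(a)+s(c))$, so $e$ is in the image of $\beta$.

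Having shown $\beta$ is a bijective morphism of algebras, I invoke Lemma~\ref{Lem:Iso} to conclude $\beta$ is an isomorphism. I do not expect any genuine obstacle here: the only mild subtlety is bookkeeping with which commuting square to use at each step (in particular one should double-check that the relation $\beta\circ s=t\circ\gamma$, together with $q\circ t=1_F$, is consistent with and can substitute for direct use of $q\circ\beta=\gamma\circ g$ in the surjectivity argument), and the observation that we never need the splittings $t$, $s$ to be compatible beyond what the hypotheses state. The conceptual point worth flagging in the write-up is that this is exactly the place where protomodularity enters: the same chase shows $\beta$ is an isomorphism whenever $\alpha$ and $\gamma$ are, which is the defining property we want.
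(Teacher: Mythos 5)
Your proof is correct and follows essentially the same route as the paper's: reduce to showing $\beta$ is a bijection (so that Lemma~\ref{Lem:Iso} finishes the job), with the identical injectivity chase and a surjectivity chase that differs only cosmetically, since your $e-\beta(s(c))$ with $\gamma(c)=q(e)$ is exactly the paper's $e-tq(e)$ via the square $\beta\circ s=t\circ\gamma$.
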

\begin{proof}
By Lemma~\ref{Lem:Iso}, it suffices that $\beta$ is injective and surjective. Consider $e\in E$, then $e-tq(e)$ is sent to $0$ by $q$, so there is a $d\in D$ such that $e=k(d)+tq(e)$. Take $a=\alpha^{-1}(d)$, $c=\gamma^{-1}(q(e))$ and $b=f(a)+s(c)$. Then 
\[
\beta(b)=\beta(f(a)+s(c))=k(\alpha(\alpha^{-1}(d)))+t(\gamma(\gamma^{-1}(q(e))))=e\text{,}
\]
which proves that $\beta$ is surjective. 

We now use the fact that $\beta^{-1}(0)=\{0\}$ if and only if $\beta$ is injective. To prove the injectivity of $\beta$, we may let $b\in B$ be such that $\beta(b)=0$. Since then also $0=q(\beta(b))=\gamma(g(b))$, and since $\gamma$ is an injection, $g(b)=0$. Hence there is an $a\in A$ such that $f(a)=b$. Now $k(\alpha(a))=\beta(f(a))=\beta(b)=0$, while both $k$ and $\alpha$ are injections, so $a=0$. It follows that $b=f(a)=0$. 
\end{proof}

Note that in this proof we are only using the (additive) group structure of the algebras. This is not a coincidence, as the result is valid in contexts which are much more general than the one where we are working now.

\begin{corollary}
	In a short exact sequence of vector spaces such as \eqref{SES}, the object~$B$ is isomorphic to $A\oplus C$.
\end{corollary}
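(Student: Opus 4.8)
The plan is to deduce this from the Split Short Five Lemma (Theorem~\ref{Thm:Proto}), which applies because $\Vect_\K$ is identified with the variety $\AbAlg_\K$ of abelian non-associative algebras (Example~\ref{Ex:Abelian}). Denote the two morphisms in the short exact sequence~\eqref{SES} by $f\colon A\to B$ and $g\colon B\to C$. \emph{First,} I would observe that the sequence splits: being a cokernel, $g$ is an epimorphism, hence a surjective linear map, and by the observation recorded in the examples above, every surjective linear map between vector spaces is a split epimorphism (this is where bases, and so the Axiom of Choice, enter the picture). So there is a linear map $s\colon C\to B$ with $g\circ s=1_C$.

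\emph{Second,} I would compare~\eqref{SES} with the split short exact sequence built from the direct sum $A\oplus C$, viewed as an abelian algebra with componentwise---that is, zero---multiplication, equipped with its coprojection $\iota_A\colon A\to A\oplus C$, its projection $\pi_C\colon A\oplus C\to C$ and the second coprojection $\iota_C\colon C\to A\oplus C$; here $\iota_A$ is a kernel of $\pi_C$ and $\pi_C\circ\iota_C=1_C$. Defining $\beta\colon A\oplus C\to B$ by $\beta(a,c)=f(a)+s(c)$, which is linear hence a morphism of abelian algebras, one obtains the diagram
\[
\xymatrix{A \ar[d]_-{1_A} \ar[r]^-{\iota_A} & A\oplus C \ar[d]^-\beta \ar@<.5ex>[r]^-{\pi_C} & C \ar@<.5ex>[l]^-{\iota_C} \ar[d]^-{1_C}\\
A \ar[r]_-f & B \ar@<.5ex>[r]^-g & C \ar@<.5ex>[l]^-s}
\]
whose three squares commute: $\beta\circ\iota_A=f=f\circ 1_A$; then $g\circ\beta=\pi_C=1_C\circ\pi_C$, using $g\circ f=0$ and $g\circ s=1_C$; and $\beta\circ\iota_C=s=s\circ 1_C$. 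Since the outer vertical maps $1_A$ and $1_C$ are isomorphisms, Theorem~\ref{Thm:Proto} forces $\beta$ to be an isomorphism, and therefore $B\cong A\oplus C$.

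I do not expect a genuine obstacle here. The commutativity of the three squares is a one-line computation, and the multiplicativity of $\beta$ is automatic because in $\AbAlg_\K$ all products vanish. The only non-formal ingredient is the existence of the splitting $s$, and even that was essentially isolated already in the examples preceding Theorem~\ref{Thm:Proto}; the rest of the argument is bookkeeping needed to present the two rows as an instance of the hypotheses of that theorem.
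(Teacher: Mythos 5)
Your proof is correct and follows essentially the same route as the paper's: both use the splitting of $g$ obtained from surjectivity of linear maps, define $\beta(a,c)=f(a)+s(c)$, and apply the Split Short Five Lemma to the comparison with the split exact sequence on $A\oplus C$. The only difference is that you spell out the commutativity of the three squares, which the paper leaves to the reader.
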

\begin{proof}
Being a cokernel, the morphism $g$ is a surjection, which implies that it is a split epimorphism of vector spaces. Let $s\colon C\to B$ be a splitting for $g$, and consider the diagram
\[
\xymatrix{A \ar@{=}[d] \ar[r]^-{\langle1_A,0\rangle} & A\oplus C \ar[d]^-\beta \ar@<.5ex>[r]^-{\pi_C} & C \ar@<.5ex>[l]^-{\langle0,1_C\rangle} \ar@{=}[d]\\
A \ar[r]_-f & B \ar@<.5ex>[r]^-g & C \ar@<.5ex>[l]^-s}
\]
where $\beta(a,c)=f(a)+s(c)$ for $a\in A$, $c\in C$. The result now follows from Theorem~\ref{Thm:Proto}.
\end{proof}

Nothing like this is true for algebras, though. In general it is much harder to recover the middle object in a short exact sequence. 

\begin{exercise}
	Find an example of a short exact sequence in which the middle object does not decompose as a direct sum of the outer objects.
\end{exercise}

On the other hand, Theorem~\ref{Thm:Proto} says that \emph{any variety of non-associative algebras is \defn{Bourn protomodular}}, and as such it has important consequences, some of which we shall encounter later on. 

In order for us to give the definition of a (potentially long) \emph{exact sequence}, we need the category to have a richer structure. In this stronger context, we will also be able to extend Theorem~\ref{Thm:Proto} to general (non-split) short exact sequences.

To proceed, we need a more precise view on precisely which kind of an equation may determine a variety of non-associative algebras. It turns out that for this, the notion of a \emph{non-associative polynomial} is crucial. We use it to determine some important adjunctions which occur in this context.

\section{Polynomials and free non-associative algebras}

\begin{definition}
A \defn{magma} is a set $X$ equipped with a binary operation
\[
\cdot\colon X\times X\to X\colon (x,y)\mapsto x\cdot y=xy\text{.}
\]
A morphism of magmas $f\colon(X,\cdot )\to (Y,\cdot)$ is a function $f\colon X\to Y$ which preserves the multiplication: $f(x\cdot x')=f(x)\cdot f(x')$ for all $x$, $x'\in X$. Magmas and their morphisms form a category denoted $\Mag$.
\end{definition}

Like for non-associative algebras, the multiplication in magma $(X,\cdot)$ need not satisfy any additional rules such as associativity or the existence of a unit. On the other hand, any (abelian) group or (commutative) monoid has an underlying magma structure, and a non-associative algebra has two such structures (associated with $+$ and $\cdot$).

\begin{definition}
Let $S$ be a set. A \defn{non-associative word} in the alphabet $S$ is a finite sequence of elements of $S$ (called the \defn{letters} of the word) and brackets ``$($'' and ``$)$'' of the following kinds (and no others):
\begin{enumerate}
\item for every element $s$ of $S$, $(s)$ is a non-associative word;
\item if $w_1$ and $w_2$ are non-associative words, then the string $(w_1w_2)$ is a non-associative word.
\end{enumerate}
To avoid a rapidly increasing number of brackets, we will not write the outer brackets in a non-associative word.

We write $M(S)$ for the set of non-associative words in the alphabet $S$. The \defn{length} of a word is the number of letters (in $S$, brackets don't count) that it consists of, and the \defn{degree} of a letter the number of times it occurs in the given word. We sometimes write $\varphi(x_{1},\dots,x_{n})$ for a word in which the elements $x_{1}$, \dots, $x_{n}$ of $S$ (and no others) appear.

The rule 2.\ allows us to define a binary operation $\cdot$ on $M(S)$ making it into a magma which we call the \defn{free magma} on $S$. The canonical inclusion of $S$ into $M(S)$ obtained from 1.\ is written $\eta_S\colon {S\to M(S)}$.
\end{definition}

\begin{example}
When $S=\{x,y,z\}$, the strings $x$, $xy$, $(xy)z$, $x(yz)$, $x(xy)$, $(x(yz))x$ and $(xy)(zx)$ are elements of $M(S)$. The strings $x(yz)x$ and $xyz$ are not in $M(S)$, and neither is the string $()$. (This ``empty string'' appears when considering free \emph{unitary} magmas.)
\end{example}

\begin{proposition}
	For every magma $(X,\cdot)$ and every function $f\colon {S\to X}$ there exists a unique morphism of magmas $\overline{f}\colon (M(S),\cdot)\to (X,\cdot)$ such that the triangle of functions
\[
\xymatrix@!0@=4em{S \ar[rr]^-{\eta_S} \ar[rd]_-{f} && M(S) \ar@{-->}[ld]^-{\overline{f}}\\
&X}
\]
commutes.
\end{proposition}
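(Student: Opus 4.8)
The plan is to prove the universal property of the free magma by induction on the structure of non-associative words. First I would establish existence: since every word $w \in M(S)$ is built up from letters of $S$ using the binary operation, I define $\overline{f}(w)$ by recursion on the construction rules. On the base case, a one-letter word $(s)$ must be sent to $f(s)$ in order for the triangle to commute, so I set $\overline{f}((s)) = f(s)$. On the inductive step, a word of the form $(w_1 w_2)$ must be sent to $\overline{f}(w_1)\cdot \overline{f}(w_2)$ in order for $\overline{f}$ to be a magma morphism; since $w_1$ and $w_2$ are strictly shorter (or rather, strictly simpler in the construction ordering), the recursion is well-founded and this defines a function $\overline{f}\colon M(S)\to X$.

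Next I would verify that this $\overline{f}$ is indeed a morphism of magmas and makes the triangle commute. Commutativity of the triangle is immediate from the base case: $\overline{f}(\eta_S(s)) = \overline{f}((s)) = f(s)$. That $\overline{f}$ preserves the multiplication is built into the inductive clause of the definition: $\overline{f}(w_1 \cdot w_2) = \overline{f}((w_1 w_2)) = \overline{f}(w_1)\cdot\overline{f}(w_2)$ by construction. So existence is essentially forced by the two defining clauses.

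For uniqueness, suppose $g\colon (M(S),\cdot)\to(X,\cdot)$ is any magma morphism with $g\circ\eta_S = f$. I would show $g = \overline{f}$ by induction on word structure. For a letter, $g((s)) = g(\eta_S(s)) = f(s) = \overline{f}((s))$. For a compound word, using that $g$ is a magma morphism, $g((w_1 w_2)) = g(w_1)\cdot g(w_2)$, which by the induction hypothesis equals $\overline{f}(w_1)\cdot\overline{f}(w_2) = \overline{f}((w_1 w_2))$. Hence $g$ and $\overline{f}$ agree on all of $M(S)$.

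The only genuinely delicate point is making the recursion rigorous: one must know that every element of $M(S)$ is \emph{uniquely} of one of the two forms in the definition of a non-associative word --- that is, unique readability of words, so that the two clauses defining $\overline{f}$ do not conflict and together cover every case. This is a standard parsing fact (each non-associative word has a unique outermost bracket pair determining its decomposition as $(w_1 w_2)$, or else it is a single letter), and I would either invoke it directly or sketch it via a count of brackets. Once unique readability is granted, the rest of the argument is the routine structural induction above.
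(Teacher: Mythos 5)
Your proof is correct and follows essentially the same route as the paper's, which simply observes that the two clauses (letters must go to $f(s)$, products must be preserved) force the definition of $\overline{f}$ by structural recursion. You are in fact more careful than the paper, which leaves the existence half and the unique-readability issue implicit; your explicit attention to unique readability of words is a genuine (if standard) point that the paper glosses over.
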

\begin{proof}
The function $\overline{f}$ must send a word $(s)$ where $s\in S$ to $f(s)$ in order to make the triangle commute. It must preserve products, so a string $w_1w_2$ is sent to $f(w_1)\cdot f(w_2)$.
\end{proof}

The free magma construction conspires to a functor $M\colon\Set\to \Mag$ which sends a set $S$ to $M(S)$, and a function $f\colon {S\to T}$ to the morphism of magmas $M(f)\colon M(S)\to M(T)$ induced by $\eta_T\circ f\colon {S\to M(T)}$. On the other hand, there is the \defn{forgetful functor} $\Mag\to \Set$ which forgets about multiplications, taking a magma and sending it to its underlying set. 

This situation fits the following general definition, of one of the key concepts in Category Theory:

\begin{definition}\label{Def:Adj}
Consider a pair of functors $L\colon{\C\to \D}$ and $R\colon{\D\to \C}$. Then $L$ is said to be \defn{left adjoint} to $R$, and $R$ is said to be \defn{right adjoint} to $L$, when for every object $C$ of $\C$ there is a morphism $\eta_C\colon {C\to R(L(C))}$ in $\C$, such that for every object $D$ of $\D$ and every morphism $f\colon {C\to R(D)}$ there exists a unique morphism $\overline{f}\colon {L(C)\to D}$ in $\D$ such that the triangle
\[
\xymatrix@!0@=4em{C \ar[rr]^-{\eta_C} \ar[rd]_-{f} && R(L(C)) \ar@{-->}[ld]^-{R(\overline{f})}\\
&R(D)}
\]
commutes in $\C$. This is often denoted in symbols as $L\dashv R$. When they exist, adjoints are unique (up to isomorphism), so we may say that $R$ \defn{has a left adjoint} or $L$ \defn{has a right adjoint}. The collection of morphisms $(\eta_C)_C$ is called the \defn{unit} of the adjunction, and always forms a \defn{natural transformation} from $1_\C$ to $R\circ L$, which means that for every morphism $c\colon{C\to C'}$ in $\C$, the square
\[
\xymatrix{C \ar[d]_-{c} \ar[r]^-{\eta_C} & R(L(C)) \ar[d]^-{R(L(c))}\\
C' \ar[r]_-{\eta_{C'}} & R(L(C'))}
\]
in $\C$ is commutative.
\end{definition}

In other words, the free magma functor is left adjoint to the forgetful functor to~$\Set$, and in fact this is the reason why it carries that name: it plays the same role as the free group functor, for instance.

There are many equivalent ways to phrase adjointness, and going into the general theory of adjunctions here would lead us too far. However, we will meet several further examples, starting at once with the following one, which makes one of the relationships between non-associative algebras and magmas explicit:

\begin{example}\label{Ex:MagAlgFunctor}
For any field $\K$, the forgetful functor $\Alg_\K\to \Mag$ which takes an algebra $(A,\cdot)$ and sends it to the underlying set of the vector space $A$, equipped with the multiplication $\cdot$, has a left adjoint denoted $\K[-]\colon{\Mag\to \Alg_\K}$ and called the \defn{magma algebra functor}. (It is a variation on the \emph{group algebra functor} which plays a similar role for groups and cocommutative Hopf algebras.) 

The functor $\K[-]$ takes a magma $(X,\cdot)$ and sends it to the $\K$-vector space $\K[X]$ with basis $X$, whose elements are finite linear combinations of the elements of $X$, equipped with the multiplication $\cdot \colon \K[X]\times \K[X]\to \K[X]$ defined by 
\[
\bigl(\sum_{i=1}^n\lambda_ix_i,\sum_{j=1}^k\mu_jy_j\bigr)\mapsto \sum_{i=1}^n\sum_{j=1}^k\lambda_i\mu_j (x_i\cdot y_j)
\]
for $x_i$, $y_j\in X$ and $\lambda_i$, $\mu_j\in \K$. Note that its bilinearity is obvious.

$\K[-]$ satisfies the universal property of a left adjoint, because for the natural inclusion $\tilde\eta_{(X,\cdot)}\colon {(X,\cdot)\to (\K[X],\cdot)}$, we have that any given morphism of magmas $f\colon X\to A$, where $(A,\cdot)$ is a non-associative algebra, extends to a unique morphism of algebras $f'\colon \K[X]\to A$ such that $f'\circ \tilde\eta_X = f$ in $\Mag$. Indeed, this just follows from the fact that $X$ is a basis of~$\K[X]$ and the definition of the multiplication of that algebra. As in the case of magmas, this determines how the functor $\K[-]$ should act on morphisms.
\end{example}

\begin{example}\label{Ex:AdjComp}
	\emph{Adjunctions compose}, and thus we find the construction of the \defn{free non-associative $\K$-algebra} on a set. 
\[
\xymatrix{\Set \ar@<1ex>[r]^-{M} \ar@{}[r]|-{\perp} & \Mag \ar@<1ex>[r]^-{\K[-]} \ar@{}[r]|-{\perp} \ar@<1ex>[l]^-{\Forget} & \Alg_\K \ar@<1ex>[l]^-{\Forget}}
\]
The functors to the left first forget the vector space structure of an algebra $A$, then the multiplication of the underlying magma $(A,\cdot)$, so that we obtain the underlying set of $A$. Looking at the diagram in $\Set$
\[
\xymatrix@!0@C=6em@R=4em{S \ar[r]^-{\eta_S} \ar[rd]_-{f} & M(S) \ar@{-->}[d]^-{\overline{f}} \ar[r]^-{\tilde\eta_{M(S)}} & \K[M(S)] \ar@{-->}[ld]^-{\overline{f}'} \\
&A}
\]
it is easy to see that the composite functor to the right does indeed satisfy the universal property of a left adjoint. 
\end{example}

\begin{exercise}\label{Exer:AdjComp}
	Use this idea to prove that \emph{adjunctions compose} in general: given functors $L\colon{\C\to \D}$, $L'\colon{\D\to \E}$ and $R\colon{\D\to \C}$, $R'\colon{\E\to \D}$ such that $L\dashv R$ and $L'\dashv R'$, show that $L'\circ L\dashv R\circ R'$.
\end{exercise}

For a given set $S$, an element of $\K[M(S)]$ is a $\K$-linear combination of non-associative words in the alphabet $S$. In other words:

\begin{definition}\label{Def:Polynomial}
For a given set $S$, a \defn{(non-associative) polynomial} with variables in~$S$ is an element of the free $\K$-algebra on $S$. For the sake of simplicity, we write $\Free{S}$ for the algebra $\K[M(S)]$.
	
A \defn{monomial} in $\Free{S}$ is any scalar multiple of an element of~$M(S)$. The \defn{type} of a monomial $\varphi(x_{1},\dots,x_{n})$ is the element $(k_{1},\dots,k_{n})\in \N^{n}$ where $k_{i}$ is the degree of $x_{i}$ in $\varphi(x_{1},\dots,x_{n})$. A polynomial is \defn{homogeneous} if its monomials are all of the same type. Any polynomial may thus be written as a sum of homogeneous polynomials, which are called its \defn{homogeneous components}.
\end{definition}

\begin{remark}
Our algebras need not have units, and so our polynomials have no constant terms. 
\end{remark}

\begin{example}
When $S=\{x,y,z\}$ and $\K$ is any field, $x$, $xy$, $xx$, $(xx)x$, -$x(xx)$, $(x(yz))x$, $xy+yx$, $x(yz)-(xy)z$, $x(yz)+z(xy)+ y(zx)$ and $xy-yx+(xy)z$ are polynomials over $S$. The first six of those are monomials, the next three are homogeneous polynomials (of respective types $(1,1)$, $(1,1,1)$ and $(1,1,1)$ in $(x,y,z)$), while the last one is not homogeneous, and its homogeneous components are $xy-yx$ and $(xy)z$. 
\end{example}

\section{Varieties of non-associative algebras}\label{S8}
The next step is to understand what is a variety of non-associative algebras from the categorical viewpoint. We first make Definition~\ref{Def:Variety} fully precise. What was missing there is an explicit description of what exactly is an equational condition that determines a variety. We now see that it is given by an algebra of non-associative polynomials, an ideal of a free algebra.

We fix a countable set of variables $X=\{x_1,x_2,\dots,x_n,\dots\}$ and consider the free $\K$-algebra $\Free{X}$. An element $\psi$ of this algebra is a polynomial in a finite number of variables, say $x_1$, \dots, $x_n$. (In concrete examples we often prefer using letters $x$, $y$, $z$, etc.\ for the variables in a polynomial.) 

Given elements $a_1$, \dots, $a_n$ in an algebra~$A$, the universal property of free algebras gives us a unique algebra morphism $z\colon\Free{X}\to A$ which sends $x_i$ to $a_i$ if $1\leq i\leq n$, and to $0$ otherwise. We shall write $\psi(a_1,\dots,a_n)$ for $z(\psi)$, and say that the element $\psi(a_1,\dots,a_n)$ of~$A$ is obtained by \defn{substitution} of the variables $x_1$, \dots, $x_n$ in the non-associative polynomial $\psi(x_1,\dots,x_n)$ by elements $a_1$, \dots, $a_n$ of $A$. In some sense, this process \emph{evaluates} the polynomial in $a_1$, \dots, $a_n$.

\begin{lemma}[Algebra morphisms preserve polynomials]\label{Lem:Subst}
	Let $f\colon{A\to B}$ be a $\K$-algebra morphism and $\psi$ a polynomial in $n$ variables. Then for all $a_1$, \dots, $a_n\in A$, we have $f(\psi(a_1,\dots,a_n))=\psi(f(a_1),\dots,f(a_n))$. 
\end{lemma}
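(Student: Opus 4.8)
The plan is to reduce the statement to the universal property of the free algebra $\Free{X}$. Fix the morphism $f\colon A\to B$ and the polynomial $\psi=\psi(x_1,\dots,x_n)\in\Free{X}$, together with elements $a_1,\dots,a_n\in A$. As recalled just before the lemma, the evaluation $\psi(a_1,\dots,a_n)$ is by definition $z_A(\psi)$, where $z_A\colon\Free{X}\to A$ is the unique algebra morphism sending $x_i\mapsto a_i$ for $1\le i\le n$ and $x_i\mapsto 0$ otherwise. Likewise $\psi(f(a_1),\dots,f(a_n))=z_B(\psi)$, where $z_B\colon\Free{X}\to B$ is the unique algebra morphism sending $x_i\mapsto f(a_i)$ for $1\le i\le n$ and $x_i\mapsto 0$ otherwise.

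The key step is then to observe that $f\circ z_A\colon\Free{X}\to B$ is an algebra morphism (being a composite of algebra morphisms) which sends $x_i\mapsto f(z_A(x_i))=f(a_i)$ for $1\le i\le n$ and $x_i\mapsto f(0)=0$ otherwise. Thus $f\circ z_A$ satisfies exactly the defining property that uniquely characterises $z_B$. By the uniqueness clause in the universal property of $\Free{X}$, we conclude $f\circ z_A=z_B$. Evaluating both sides at $\psi$ gives
\[
f(\psi(a_1,\dots,a_n))=f(z_A(\psi))=(f\circ z_A)(\psi)=z_B(\psi)=\psi(f(a_1),\dots,f(a_n))\text{,}
\]
which is the claim.

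There is no serious obstacle here: the whole argument is the observation that "evaluate, then apply $f$" and "apply $f$, then evaluate" are both algebra morphisms out of the free algebra agreeing on the generating set $X$, hence equal. The only thing to be a little careful about is the bookkeeping on the generators outside $\{x_1,\dots,x_n\}$ (they go to $0$ on both sides, using $f(0)=0$ since $f$ is linear), and the fact that we are genuinely invoking the \emph{uniqueness} half of the adjunction from Example~\ref{Ex:AdjComp}, not merely its existence half. An alternative, more pedestrian route would be to induct on the structure of a monomial (base case $\psi=x_i$, inductive step $\psi=\psi_1\psi_2$ using $f(uv)=f(u)f(v)$) and then extend to arbitrary polynomials by linearity using $f(u+v)=f(u)+f(v)$ and $f(\lambda u)=\lambda f(u)$; but the universal-property argument is cleaner and is the one I would present.
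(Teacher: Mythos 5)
Your proof is correct and follows essentially the same route as the paper: the paper also observes that $f\circ z$ is the substitution morphism for $f(a_1),\dots,f(a_n)$ and concludes by the uniqueness in the universal property of the free algebra. Your version simply spells out the bookkeeping (including the generators outside $\{x_1,\dots,x_n\}$) more explicitly.
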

\begin{proof}
	This follows immediately from the definition of substitution in a polynomial. If $z\colon\Free{X}\to A$ determines substitution by $a_1$, \dots, $a_n$ in $A$, then $f\circ z$ determines substitution by $f(a_1)$, \dots, $f(a_n)$ in $B$; now $f(z(\psi))=f(\psi(a_1,\dots,a_n))$ is $\psi(f(a_1),\dots,f(a_n))$ by definition.
\end{proof}

This allows us to extend Proposition~\ref{Prop:Generated-Ideal} and give yet another description of the ideal of an algebra, generated by a subset.

\begin{lemma}\label{Lem:Generated-Ideal}
Given any subset $S$ of an algebra $A$, the ideal generated by $S$ is the vector space spanned by all $\varphi(a_1,\dots, a_n)$, where $\varphi$ is a mononomial (a scalar multiple of a word) and $a_1$, \dots, $a_n$ are elements of $A$ with at least one of the $a_i$ in~$S$.
\end{lemma}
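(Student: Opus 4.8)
The plan is to show that the vector space $W$ spanned by all elements of the form $\varphi(a_1,\dots,a_n)$, where $\varphi$ is a monomial and at least one argument lies in $S$, coincides with the ideal $I$ generated by $S$ described in Proposition~\ref{Prop:Generated-Ideal}. Since set inclusion is antisymmetric, I would prove $W\subseteq I$ and $I\subseteq W$ separately. For the first inclusion, I would argue that each generator $\varphi(a_1,\dots,a_n)$ of $W$ actually lies in $I$: decompose $\varphi$ according to its bracketing, and by induction on the length (or degree) of the monomial $\varphi$, the subword containing the argument from $S$ is already in $I$ by the inductive hypothesis, and then multiplying on the left or right by another element of $A$ keeps us inside $I$ because $I$ is an ideal; the base case is a scalar multiple of a single element of $S$, which lies in $I_0\subseteq I$. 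Since $I$ is a subspace, the span $W$ of these generators is contained in $I$.

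For the reverse inclusion $I\subseteq W$, the cleanest route is to verify that $W$ is itself an ideal of $A$ containing $S$, and then invoke the minimality of $I$ from Proposition~\ref{Prop:Generated-Ideal}(1). That $S\subseteq W$ is immediate, taking $\varphi=x_1$. That $W$ is a subspace is built into its definition as a span. The ideal condition $AW\subseteq W\supseteq WA$ is the substantive point: given a generator $\varphi(a_1,\dots,a_n)\in W$ with some $a_i\in S$, and any $b\in A$, the products $b\cdot\varphi(a_1,\dots,a_n)$ and $\varphi(a_1,\dots,a_n)\cdot b$ are again of the form $\psi(a_1,\dots,a_n,b)$ for the monomials $\psi=x_{n+1}\varphi$ and $\psi=\varphi x_{n+1}$, which still have an argument in $S$; extending by bilinearity handles arbitrary elements of $A$ in place of $b$. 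Hence $W$ is an ideal containing $S$, so $I\subseteq W$. Combining the two inclusions gives $I=W$, and one can then note that this refines Proposition~\ref{Prop:Generated-Ideal}, whose ``linear combination'' description is recovered by observing that products of elements of $S$ with elements of $A$ are exactly values of monomials of length two.

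**The main obstacle** I anticipate is purely bookkeeping: making the induction on monomial structure precise, i.e.\ setting up the recursion on the free magma $M(X)$ so that ``the subword containing an element of $S$'' is well-defined when $S$-arguments occur in both factors, and checking that multiplying a monomial by a fresh variable on either side produces a genuine monomial of the required shape. None of this is conceptually deep — it is the same argument as in Proposition~\ref{Prop:Generated-Ideal} reorganised around monomials rather than the filtration $(I_n)_n$ — but it requires a little care with the definition of substitution (Lemma~\ref{Lem:Subst} is what guarantees the formal manipulations of $\psi(x_1,\dots,x_{n+1})$ behave correctly under evaluation). I would keep the write-up short, citing Proposition~\ref{Prop:Generated-Ideal} for the existence and minimality of $I$ and doing the two inclusions in a couple of lines each.
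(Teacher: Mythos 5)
Your proof is correct, and half of it coincides with the paper's: for the inclusion ``ideal generated by $S$ $\subseteq$ span $W$'' the paper also simply observes that $W$ is an ideal containing $S$ (because $x_{n+1}\varphi$ and $\varphi x_{n+1}$ are again monomials) and invokes minimality from Proposition~\ref{Prop:Generated-Ideal}(1). Where you diverge is the inclusion $W\subseteq I$. You prove it by structural induction on the monomial: split $\varphi=\varphi_1\varphi_2$, note the factor carrying the $S$-argument lies in $I$ by the inductive hypothesis, and absorb the other factor using the ideal property; this is elementary and self-contained, at the cost of the bookkeeping you yourself flag (choosing a factor when $S$-arguments occur on both sides, the base case in $I_0$). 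The paper instead avoids induction entirely: for any ideal $J\supseteq S$ it applies Lemma~\ref{Lem:Subst} to the quotient map $q\colon A\to A/J$, so that $q(\varphi(a_1,\dots,a_n))=\varphi(q(a_1),\dots,q(a_n))=0$ because a monomial with one argument equal to zero evaluates to zero; hence each generator of $W$ lies in $J$, and intersecting over all such $J$ gives $W\subseteq I$. The paper's route is shorter and reuses the just-proved substitution lemma; yours makes the mechanism (why ideals swallow monomial values) more explicit. Both arguments tacitly use the paper's convention that every listed variable actually occurs in the word $\varphi$ --- without that, an argument from $S$ attached to a variable absent from $\varphi$ would give no control --- so it is worth stating that you rely on it.
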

\begin{proof}
The vector space spanned by these elements is certainly an ideal of $A$, because if $\varphi(x_1,\dots, x_n)$ is a monomial, then so are the products $x_{n+1}\varphi(x_1,\dots, x_n)$ and $\varphi(x_1,\dots, x_n)x_{n+1}$.

Now suppose $I$ is an ideal of $A$ that contains $S$. By Lemma~\ref{Lem:Subst}, the quotient map $q\colon A\to A/I$ sends $\varphi(a_1,\dots, a_n)$ to $\varphi (q(a_1),\dots,q(a_n))$. This element of $A/I$ is zero, because $\varphi$ is a monomial and one of the $a_i$ is in $S\subseteq I$. Hence $\varphi(a_1,\dots, a_n)$ is in the kernel $I$ of $q$. It follows from item 1.\ in Proposition~\ref{Prop:Generated-Ideal} that $\varphi(a_1,\dots, a_n)$ is a member of the ideal of $A$, generated by $S$.
\end{proof}

\begin{definition}
	A non-associative polynomial $\psi=\psi(x_1,\dots,x_n)$ is called an \defn{identity} of an algebra $A$ if $\psi(a_1,\dots,a_n)=0$ for all $a_1$, \dots, $a_n\in A$. We also say that~$A$ \defn{satisfies the identity} $\psi$ or that the identity $\psi$ \defn{is valid} in~$A$. 
	
Let $I$ be a subset of $\Free{X}$. The class of all algebras that satisfy all identities in~$I$ is called the \defn{variety of $\K$-algebras determined by $I$}.
\end{definition}

 As in Definition~\ref{Def:Variety}, we thus obtain a full subcategory of $\Alg_\K$. The main difference is that here we focus on polynomials instead of equations; for instance, instead of expressing commutativity as an equation $xy=yx$ that all pairs of elements $x$,~$y$ of an algebra $A$ must satisfy, we write the condition as $xy-yx=0$, and notice that the expression $xy-yx$ is a polynomial in $x$ and $y$, actually an element of a free non-associative algebra.

\begin{definition}\label{Def:T-ideal}
A \defn{$T$-ideal (over $\K$)} is an ideal of $\Free{X}$ which is closed under substitution. 
\end{definition}

The set of all polynomial identities of an arbitrary variety of non-associative algebras forms a $T$-ideal. Conversely, given a $T$-ideal, we may consider the variety determined by the identities in the ideal. This gives rise to a one-to-one correspondence between the $T$-ideals over $\K$ and the varieties of non-associative algebras over~$\K$. 

\begin{definition}
Let $I$ be a subset of $\Free{X}$. For any non-associative $\K$-algebra $A$, we let $I(A)$ be the ideal of $A$ generated by all elements of the form $\psi (a_1,\dots,a_n)$, where $\psi\in I$ and $a_1$, \dots, $a_n\in A$. 
\end{definition}

By definition, for any subset $I$ of $\Free{X}$, the ideal $I(\Free{X})$ is a $T$-ideal. In fact, $I(\Free{X})$ is the set of all identities that hold in the variety of $\K$-algebras determined by a given set of polynomials~$I$.

\begin{proposition}\label{Prop:Reflective}
	Let $\V$ be a variety of $\K$-algebras determined by a set of polynomials~$I$. Then the inclusion functor $\V\to \Alg_\K$ has a left adjoint $L\colon {\Alg_\K\to \V}$.
\end{proposition}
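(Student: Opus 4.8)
The plan is to define $L$ on objects by the quotient $L(A) = A/I(A)$, where $I(A)$ denotes, as in the definition preceding the statement, the ideal of $A$ generated by all substitution instances $\psi(a_1,\dots,a_n)$ with $\psi\in I$ and $a_1,\dots,a_n\in A$. Since $I(A)$ is by construction an ideal, Proposition~\ref{Prop:Quotient-of-Ideal} furnishes a unique algebra structure on $A/I(A)$ for which the canonical map $\eta_A\colon A\to A/I(A)$ is a morphism of algebras; this $\eta_A$ will be the component at $A$ of the unit of the adjunction.

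First I would verify that $L(A)$ actually lies in $\V$, i.e.\ that every $\psi\in I$ is an identity of $A/I(A)$. Fixing $\psi=\psi(x_1,\dots,x_n)\in I$ and arbitrary elements $\bar a_1,\dots,\bar a_n$ of $A/I(A)$, choose preimages $a_i\in A$ under $\eta_A$. By Lemma~\ref{Lem:Subst}, $\psi(\bar a_1,\dots,\bar a_n)=\eta_A\bigl(\psi(a_1,\dots,a_n)\bigr)$, and $\psi(a_1,\dots,a_n)$ belongs to $I(A)=\Ker\eta_A$ by the very definition of $I(A)$. Hence $\psi(\bar a_1,\dots,\bar a_n)=0$, so $L(A)\in\V$.

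Next comes the universal property in the form of Definition~\ref{Def:Adj}, with the right adjoint $R$ being the inclusion $\V\to\Alg_\K$. Given $B\in\V$ and a morphism $f\colon A\to B$ in $\Alg_\K$, I claim that $f$ vanishes on $I(A)$. On the generators this follows from Lemma~\ref{Lem:Subst} together with the hypothesis that $B$ satisfies $\psi$: indeed $f(\psi(a_1,\dots,a_n))=\psi(f(a_1),\dots,f(a_n))=0$. Since $f$ is an algebra morphism, $\Ker f$ is an ideal of $A$ containing all these generators, and $I(A)$ is the smallest such ideal (Proposition~\ref{Prop:Generated-Ideal}), so $I(A)\subseteq\Ker f$. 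As $\eta_A$ is the cokernel of the inclusion $I(A)\hookrightarrow A$ (Proposition~\ref{Prop:Quotient-of-Ideal}), there is a unique algebra morphism $\overline f\colon A/I(A)\to B$ with $\overline f\comp\eta_A=f$; and because $\V$ is a \emph{full} subcategory of $\Alg_\K$ containing both $A/I(A)$ and $B$, the map $\overline f$ is automatically a morphism in $\V$. This is exactly the factorisation required by Definition~\ref{Def:Adj}, so $L\dashv R$. The action of $L$ on a morphism $c\colon A\to A'$ is then forced: $L(c)$ is the unique morphism $L(A)\to L(A')$ obtained by applying this universal property to $\eta_{A'}\comp c\colon A\to R(L(A'))$, and naturality of $\eta$ holds by the same uniqueness.

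There is no serious obstacle here; everything reduces to results already in hand. The one point that deserves care is the step from ``$f$ kills the generators of $I(A)$'' to ``$f$ kills all of $I(A)$'': this uses that $\Ker f$ is itself an ideal, hence contains the ideal generated by those generators. The remainder is bookkeeping about full subcategories and the cokernel description of a quotient.
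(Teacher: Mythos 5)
Your proof is correct and follows essentially the same route as the paper: define $L(A)=A/I(A)$, check that $L(A)$ lies in $\V$ via Lemma~\ref{Lem:Subst}, and obtain the required factorisation through the cokernel $\eta_A$ of the inclusion of $I(A)$. You are in fact slightly more careful than the paper on the passage from ``$f$ vanishes on the generators of $I(A)$'' to ``$f$ vanishes on all of $I(A)$'' (using that $\Ker f$ is an ideal containing those generators), a point the paper leaves implicit.
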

\begin{proof}
	The functor $L$ sends an object $A$ to the quotient $L(A)=A/I(A)$, so that for each $A$ we have a short exact sequence
\[
\xymatrix{0 \ar[r] & I(A) \ar[r]^-{\mu_A} & A \ar[r]^-{\eta_A} & L(A) \ar[r] & 0}
\] 
in $\Alg_\K$. The algebra $A/I(A)$ is indeed an object of $\V$: if $\psi(x_1,\dots,x_n)\in I$ and $a_1$, \dots, $a_n\in A$, then
\[
\psi(a_1+I(A),\dots,a_n+I(A))=\psi(a_1,\dots,a_n)+I(A)=I(A)
\]
since $\psi(a_1,\dots,a_n)\in I(A)$, and by the definition of the operations in $A/I(A)$. So $L(A)$ satisfies all identities of $I$.

Suppose $B$ is an algebra in $\V$, and let $f\colon{A\to B}$ be a morphism in $\Alg_\K$. Then $f\circ \mu_A=0$, because for every element $\psi(a_1,\dots,a_n)$ of $I(A)$,
\[
f(\psi(a_1,\dots,a_n))=\psi(f(a_1),\dots,f(a_n))\in B 
\]
by Lemma~\ref{Lem:Subst}, which is zero because $B$ satisfies all identities in $I$. Hence the morphism $f$ factors uniquely through the cokernel $\eta_A$ of $\mu_A$, which proves the universal property of the left adjoint $L$. (Again, like in Example~\ref{Ex:MagAlgFunctor}, the action of~$L$ on morphisms making it a functor is determined by the universal property.)
\end{proof}

\begin{exercise}
	Prove that $A\mapsto I(A)$ determines a functor $I\colon {\Alg_\K\to \Alg_\K}$. 
\end{exercise}

In other words, $\V$ is a \emph{reflective subcategory} of $\Alg_\K$:

\begin{definition}
	A full subcategory $\D$ of a category $\C$ is called a \defn{reflective subcategory} when the inclusion functor ${\D\to \C}$ has a left adjoint.
\end{definition}

\begin{exercise}
When its inclusion functor has a right adjoint, a subcategory is said to be \defn{coreflective}. Look up examples of this situation.
\end{exercise}

Again using that adjunctions compose, we now see that free $\V$-algebras exist.

\begin{corollary}
For any set $S$, the \defn{free $\V$-algebra} on $S$ exists, and is given by $L(\Free{S})$: it is the algebra of non-associative polynomials in the alphabet $S$, modulo the identities that determine $\V$. 	
\end{corollary}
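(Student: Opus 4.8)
The plan is to deduce this directly from the principle that adjunctions compose (Exercise~\ref{Exer:AdjComp}), applied to the two adjunctions we already have. On one side, Example~\ref{Ex:AdjComp} provides the free non-associative algebra functor $\Free{-}\colon\Set\to\Alg_\K$, left adjoint to the forgetful functor $\Forget\colon\Alg_\K\to\Set$; recall that $\Free{S}=\K[M(S)]$ is precisely the algebra of non-associative polynomials in the alphabet $S$. On the other side, Proposition~\ref{Prop:Reflective} gives the reflector $L\colon\Alg_\K\to\V$, left adjoint to the inclusion $\V\to\Alg_\K$, with $L(A)=A/I(A)$.

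First I would observe that the forgetful functor $\V\to\Set$ is literally the composite of the inclusion $\V\to\Alg_\K$ with $\Forget\colon\Alg_\K\to\Set$: since $\V$ is a \emph{full} subcategory of $\Alg_\K$, the underlying set of a $\V$-algebra is the one computed in $\Alg_\K$, and morphisms of $\V$-algebras are just algebra morphisms. Hence ``the free $\V$-algebra on $S$'' means, by definition, the value at $S$ of a left adjoint to this composite forgetful functor.

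Then I would invoke Exercise~\ref{Exer:AdjComp}: the composite $L\circ\Free{-}\colon\Set\to\V$ is left adjoint to $\Forget$ precomposed with the inclusion, that is, to the forgetful functor $\V\to\Set$. Unwinding, the free $\V$-algebra on $S$ is $L(\Free{S})=\Free{S}/I(\Free{S})$, the algebra of non-associative polynomials in the alphabet $S$ modulo the $T$-ideal $I(\Free{S})$ of identities determining $\V$ (the $T$-ideal identified just after Definition~\ref{Def:T-ideal}), which is exactly the statement. The unit of the composite adjunction at $S$ is the composite of $\eta_S\colon S\to\Free{S}$ (read between the underlying sets) with the underlying function of the reflection unit $\eta_{\Free{S}}\colon\Free{S}\to L(\Free{S})$; if one prefers, one could instead verify this universal property by hand, chaining the universal property of $\Free{S}$ with that of $L$.

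I expect no genuine obstacle here: the content is entirely formal, and the only point demanding any care is the bookkeeping of which categories the functors connect, together with the (immediate, but worth stating) identification of the forgetful functor $\V\to\Set$ with the composite $\Forget\circ(\V\to\Alg_\K)$. Everything else is a direct application of Example~\ref{Ex:AdjComp}, Proposition~\ref{Prop:Reflective}, and Exercise~\ref{Exer:AdjComp}.
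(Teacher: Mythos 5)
Your proof is correct and follows essentially the same route as the paper: both invoke the composition of the adjunctions $\Free{-}\dashv\Forget$ and $L\dashv{}$(inclusion) from Example~\ref{Ex:AdjComp}, Proposition~\ref{Prop:Reflective} and Exercise~\ref{Exer:AdjComp}. Your added remarks on identifying the forgetful functor $\V\to\Set$ with the composite and on the unit of the composed adjunction are sound bookkeeping that the paper leaves implicit.
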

\begin{proof}
We may proceed as in Example~\ref{Ex:AdjComp} and Exercise~\ref{Exer:AdjComp}. The bottom composite functor in the diagram
\[
\xymatrix{\Set \ar@<1ex>[r]^-{\Free{-}} \ar@{}[r]|-{\perp} & \Alg_\K \ar@<1ex>[l]^-{\Forget} \ar@<1ex>[r]^-{L} \ar@{}[r]|-{\perp} & \V \ar@<1ex>[l]^-{\supseteq}}
\]
is the forgetful functor to $\Set$. Its left adjoint is the above composite functor.
\end{proof}

\begin{exercise}
We know that the variety $\AssocAlg_\K$ of associative algebras is determined by the identity $x(yz)-(xy)z$. Put $I=\{x(yz)-(xy)z\}$. Prove that the free associative algebra on a singleton set $\{x\}$, which we know is the quotient of $\Free{x}\coloneq\Free{\{x\}}$ by $I(\Free{x})$, is isomorphic to the $\K$-algebra $\K\langle x\rangle$ of \emph{associative} polynomials with zero constant term in $x$ from Example~\ref{Ex:AssocPoly}. 	
\end{exercise}

\section{Regularity, exact sequences}

We already used the image $f(A)\subseteq B$ of a morphism of algebras $f\colon{A\to B}$ and noticed that it is always a subalgebra (Proposition~\ref{Prop:Cokernel}). The existence of images can be given a general categorical treatment via the concept of a \emph{regular category}, another key ingredient in the definition of semi-abelian categories. 

\begin{definition}
	In any category, a morphism $f\colon {A\to B}$ is said to be a \defn{monomorphism} when for every pair of morphisms $a$, $b\colon{X\to A}$ such that $f\circ a=f\circ b$, we have $a=b$. 
\end{definition}

\begin{exercise}
A kernel is always a monomorphism.
\end{exercise}

\begin{proposition}
	In a variety $\V$ of non-associative algebras, a morphism is a monomorphism if and only if it is an injection.
\end{proposition}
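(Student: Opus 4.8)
The plan is to prove the two implications separately. The implication ``injection $\Rightarrow$ monomorphism'' is immediate from the definitions and needs nothing about algebras; the converse is where a small amount of structure is used, namely the description of kernels from Section~\ref{S8}'s predecessors.

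First I would dispose of the easy direction. Let $f\colon {A\to B}$ be a morphism whose underlying function is injective, and suppose $a$, $b\colon {X\to A}$ are morphisms with $f\comp a=f\comp b$. Then for every $x\in X$ we have $f(a(x))=f(b(x))$, so injectivity of $f$ forces $a(x)=b(x)$; hence $a=b$. Thus $f$ is a monomorphism. This works verbatim in any variety of non-associative algebras (indeed in any concrete category with a faithful forgetful functor).

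For the converse, suppose $f\colon {A\to B}$ is a monomorphism. The key observation is that its kernel must be trivial. Let $k\colon {K\to A}$ be the kernel of $f$; it exists and, as recorded earlier, $K=\{x\in A\mid f(x)=0\}$ with $k$ the inclusion. Since $f\comp k=0=f\comp 0$ and $f$ is a monomorphism, we get $k=0$, which forces $K=0$. Injectivity of $f$ now follows: if $f(x)=f(y)$ for $x$, $y\in A$, then $f(x-y)=0$, so $x-y\in K=0$, i.e.\ $x=y$. (An alternative argument for this direction, not using kernels, tests $f$ against the free $\V$-algebra on one generator $L(\Free{\{t\}})$: given $x$, $y\in A$ with $f(x)=f(y)$, the two morphisms $L(\Free{\{t\}})\to A$ determined by $t\mapsto x$ and $t\mapsto y$ are equalised by $f$, hence equal by the monomorphism property, so $x=y$.)

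I do not expect a genuine obstacle here: the statement is really a bookkeeping exercise. The only point worth being careful about is that the reverse implication does use something beyond pure category theory in a general category a monomorphism need not be injective on points but in our setting this is supplied for free by the fact, established above, that kernels exist and coincide with the set-theoretic ones, together with the availability of a zero object. Everything else is a one-line verification.
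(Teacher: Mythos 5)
Your proof is correct, but your main argument for the direction ``monomorphism $\Rightarrow$ injection'' differs from the one in the notes. You argue via the kernel: since $f\comp k=0=f\comp 0$ and $f$ is a monomorphism, the kernel inclusion $k$ is the zero morphism, hence $K=0$, and then the additive structure converts $f(x)=f(y)$ into $x-y\in K=0$. The notes instead test $f$ directly against the free $\V$-algebra on one generator: given $c$, $d\in A$ with $f(c)=f(d)$, the two morphisms $L(\Free{\{x\}})\to A$ sending the generator to $c$ and to $d$ respectively are equalised by $f$, hence equal --- which is exactly the alternative you sketch in parentheses. Both routes are sound here. Your kernel argument leans on the pointedness of the category, the explicit description of kernels, and the underlying abelian group structure (to reduce injectivity to triviality of the kernel); it is the more ``homological'' argument and foreshadows the protomodularity theme of the notes. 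The free-algebra argument needs none of that and generalises verbatim to arbitrary varieties in the sense of universal algebra (groups, monoids, lattices, \dots), where there is no subtraction and a trivial kernel does not imply injectivity; this is presumably why the notes prefer it.
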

\begin{proof}
	If $f$ is injective, then $f(a(x))=f(b(x))$ implies that $a(x)=b(x)$. So if this happens for all $x\in X$, then $a=b$. Conversely, let $c$, $d\in A$ be such that $f(c)=f(d)$. Consider the free $\V$-algebra $X=L(\Free{x})$ on a singleton set $\{x\}$, and let $a$~and~$b\colon{X\to A}$ be the algebra morphisms determined by $a(x)=c$ and $b(x)=d$. Then $f\circ a=f\circ b$, so $a=b$, hence $c=d$.
\end{proof}

The ``dual'' concept is that of an \defn{epimorphism}---a morphism $f\colon {A\to B}$ such that for every pair of morphisms $a$, $b\colon{B\to X}$ where $a\circ f=b\circ f$, we have $a=b$---but these are not well behaved in the context where we are working (see Exercise~\ref{Ex:EpiNotSurj}). Surjective algebra morphisms are captured by something which is slightly stronger, called a \emph{regular epimorphism}. To define it, we first need to generalise the concept of a cokernel (Definition~\ref{Def:Cokernel}):

\begin{definition}
In a category $\C$, an arrow $q\colon {B\to Q}$ is a \defn{coequaliser} of a pair of parallel arrows $f$, $g\colon {A\to B}$ when $q\circ f=q\circ g$, and every other arrow $h\colon {B\to C}$ such that $h\circ f=h\circ g$ factors uniquely through $q$ via a morphism $h'\colon {Q\to C}$ such that $h'\circ q=h$.
\[
\xymatrix@R=1ex{&& Q \\
A \ar@<.5ex>[r]^-f \ar@<-.5ex>[r]_-g & B \ar[ru]^-q \ar[rd]_-{\forall h} \\
&& C \ar@{<--}[uu]_-{\exists !h'} }
\] 
\end{definition}

Note that the definition of a cokernel is the special case where $g=0$. Dually, we could define \emph{equalisers} as a generalisation of kernels, but we shall not need those here.

Actually, in a variety of non-associative algebras, coequalisers always exist, and they can be obtained as cokernels:

\begin{proposition}\label{Prop:Existence-Coequalisers}
Given a pair of parallel arrows $f$, $g\colon {A\to B}$ in a variety of non-associative algebras $\V$, their coequaliser $q\colon {B\to Q}$ may be obtained as the quotient of the ideal $I$ of $B$ generated by the elements of the form $f(a)-g(a)$ for $a\in A$.
\end{proposition}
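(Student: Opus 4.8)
The plan is to verify that the quotient map $q\colon B\to B/I$, where $I$ is the ideal of $B$ generated by the elements $f(a)-g(a)$ for $a\in A$, satisfies the universal property of the coequaliser of $f$ and $g$. We already know from Proposition~\ref{Prop:Quotient-of-Ideal} that $q$ is the cokernel of the canonical inclusion $i\colon I\to B$, so the real content is to translate between ``vanishing on $I$'' and ``equalising $f$ and $g$''.

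First I would check that $q\circ f=q\circ g$: for every $a\in A$ we have $q(f(a))-q(g(a))=q(f(a)-g(a))=0$ since $f(a)-g(a)\in I=\Ker q$. Next, given any morphism $h\colon B\to C$ with $h\circ f=h\circ g$, I want to show $h$ factors uniquely through $q$. By the cokernel property of $q$ (with respect to $i$), it suffices to show $h\circ i=0$, i.e.\ that $h$ vanishes on all of $I$. We know $h(f(a)-g(a))=h(f(a))-h(g(a))=0$ for every $a\in A$, so $h$ vanishes on the generators of $I$; then I extend this to all of $I$ using the description in Proposition~\ref{Prop:Generated-Ideal} (or Lemma~\ref{Lem:Generated-Ideal}): every element of $I$ is a linear combination of monomials $\varphi(b_1,\dots,b_n)$ in which at least one $b_j$ is of the form $f(a)-g(a)$, and since $h$ is an algebra morphism it sends such a monomial to $\varphi(h(b_1),\dots,h(b_n))$, which is zero because the corresponding slot is zero. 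Hence $h\circ i=0$, so $h=h'\circ q$ for a unique $h'\colon Q\to C$; uniqueness of $h'$ is immediate from surjectivity of $q$.

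The main obstacle, such as it is, is the bookkeeping in the last step: one must argue that $h$ annihilates the \emph{whole} ideal generated by the differences $f(a)-g(a)$, not just those differences themselves. This is exactly where the explicit inductive or monomial description of the generated ideal (Propositions~\ref{Prop:Generated-Ideal} and Lemma~\ref{Lem:Generated-Ideal}) does the work, together with the fact that algebra morphisms preserve polynomials (Lemma~\ref{Lem:Subst}). Apart from that, everything is a routine repackaging of the cokernel construction from Proposition~\ref{Prop:Cokernel}, applied to the inclusion $I\hookrightarrow B$ in place of a morphism with image $I$.
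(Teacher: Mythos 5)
Your proposal is correct and follows essentially the same route as the paper: verify $q\circ f=q\circ g$ on generators, then reduce the universal property to showing that any $h$ equalising $f$ and $g$ vanishes on all of $I$, which is done via the monomial description of the generated ideal (Lemma~\ref{Lem:Generated-Ideal}) together with the fact that algebra morphisms preserve polynomials (Lemma~\ref{Lem:Subst}). Your version is slightly more explicit than the paper's about why a monomial with a zero slot evaluates to zero and about the uniqueness of the factorisation, but there is no difference in substance.
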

\begin{proof}
Let $q$ denote the quotient $B\to B/I$. Since $q$ sends all elements of the form $f(a)-g(a)$ to zero, we already have that $q\circ f=q\circ g$. Now consider $h\colon {B\to C}$ such that $h\circ f=h\circ g$. We obtain the needed morphism $h'$ as soon as $h$ vanishes on all of the ideal $I$. By Lemma~\ref{Lem:Generated-Ideal}, an element of $I$ is a linear combination of elements of the form $\psi(b_1,\dots,b_n)$, where $\psi$ is a monomial and one of the elements $b_1$, \dots, $b_n\in B$ is equal to $f(a)-g(a)$ for some $a\in A$. By Lemma~\ref{Lem:Subst}, 
\[
h(\psi(b_1,\dots,b_n))=\psi(h(b_1), \dots, h(b_n))=0\text{,}
\]
so $h$ factors through the quotient $q$ of $I$.
\end{proof}

\begin{definition}
In a pointed category, a \defn{normal epimorphism} is a cokernel of some morphism. A \defn{regular epimorphism} is a coequaliser of some parallel pair of morphisms.
\end{definition}

\begin{proposition}\label{Prop:Surjections}
For any morphism $h\colon {B\to C}$ in a variety of non-asso\-ciative algebras, the following conditions are equivalent:
\begin{enumerate}
	\item $h$ is a normal epimorphism;
	\item $h$ is a regular epimorphism;
	\item $h$ is a surjection.
\end{enumerate}
\end{proposition}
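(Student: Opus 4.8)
The plan is to prove the cyclic chain of implications $(1)\Rightarrow(2)\Rightarrow(3)\Rightarrow(1)$, assembling at each step the explicit descriptions of kernels, cokernels, ideals and coequalisers obtained above. The implication $(1)\Rightarrow(2)$ is immediate: a cokernel of a morphism $f$ is by definition nothing but the coequaliser of the parallel pair $(f,0)$, so every normal epimorphism is in particular a regular epimorphism.

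For $(2)\Rightarrow(3)$, suppose $h\colon{B\to C}$ is a coequaliser of some parallel pair $f$, $g\colon{A\to B}$. By Proposition~\ref{Prop:Existence-Coequalisers}, the quotient map $q\colon{B\to B/I}$, where $I$ is the ideal of $B$ generated by the elements $f(a)-g(a)$ with $a\in A$, is \emph{also} a coequaliser of $(f,g)$. As with kernels and cokernels, a coequaliser of a fixed pair is unique up to a unique compatible isomorphism, so there is an isomorphism $\theta\colon{B/I\to C}$ with $h=\theta\circ q$. Since $q$ is visibly surjective and $\theta$ is a bijection by Lemma~\ref{Lem:Iso}, the composite $h$ is surjective.

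For $(3)\Rightarrow(1)$, let $h\colon{B\to C}$ be a surjection and let $k\colon{K\to B}$ be its kernel, where $K=\{b\in B\mid h(b)=0\}$; this $K$ is an ideal of $B$. By Proposition~\ref{Prop:Quotient-of-Ideal}, the canonical quotient $q\colon{B\to B/K}$ is the cokernel of $k$. Since $h\circ k=0$, the universal property of the cokernel $q$ produces a unique algebra morphism $\bar h\colon{B/K\to C}$ with $\bar h\circ q=h$. This $\bar h$ is surjective because $h$ is, and injective because $\bar h(b+K)=0$ forces $h(b)=0$, i.e.\ $b\in K$, i.e.\ $b+K=0$; hence $\bar h$ is a bijection, and so an isomorphism by Lemma~\ref{Lem:Iso}. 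Finally, post-composing the cokernel $q$ with the isomorphism $\bar h$ again yields a cokernel of $k$ (the universal property transports along $\bar h$ and its inverse), so $h=\bar h\circ q$ is a normal epimorphism.

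I do not expect a genuine obstacle: the argument is entirely a matter of combining earlier results. The only point needing a little care is the repeated passage between the ``external'' conditions $(1)$ and $(2)$ and the concrete surjectivity statement $(3)$, which rests on the uniqueness up to isomorphism of the relevant universal constructions together with Lemma~\ref{Lem:Iso}; one should also recall that a cokernel (like any coequaliser) is an epimorphism, which is what makes the factorisations $\theta$ and $\bar h$ unique.
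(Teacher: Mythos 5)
Your proposal is correct and follows essentially the same route as the paper: the implication $(1)\Rightarrow(2)$ from the definition, the reduction of coequalisers to quotients by ideals via Proposition~\ref{Prop:Existence-Coequalisers} (the paper phrases this as $(2)\Rightarrow(1)\Rightarrow(3)$ where you go directly $(2)\Rightarrow(3)$, but the content is identical), and the same factorisation argument through $B/K$ combined with Lemma~\ref{Lem:Iso} for $(3)\Rightarrow(1)$. No gaps; your extra care about transporting the cokernel property along the isomorphism $\bar h$ is a point the paper leaves implicit.
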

\begin{proof}
1.\ $\Rightarrow$ 2.\ is obvious from the definition, and 2.\ $\Rightarrow$ 1.\ is an immediate consequence of Proposition~\ref{Prop:Existence-Coequalisers}. Clearly, any quotient map $B\to B/I$ determined by an ideal $I$ is a surjection, so 1.\ implies 3. To show that 3.\ implies 1., consider a kernel $k\colon{K\to B}$ of $h$, and write $q\colon {B\to Q=B/K}$ for the cokernel of $k$. Since $h\circ k=0$, there is the unique factorisation $h'\colon {Q\to C}$ of $h$ through $q$. This $h'$ is a surjection, because $h$ is a surjection and $h'\circ q=h$. It is also an injection: let indeed $b+K\in Q$ be such that $0=h'(b+K)=h(b)$, then $b\in K$, so that $q(b)=0$. By Lemma~\ref{Lem:Iso}, this proves that $h'$ is an isomorphism.
\end{proof}

\begin{exercise}\label{Ex:EpiNotSurj}
	Find an example of an epimorphism of algebras which is not a surjection. \emph{Hint:} The canonical inclusion $\N\to \Z$ is an epimorphism of monoids.
\end{exercise}

\begin{exercise}
	Prove that, in an arbitrary category, a morphism which is both a regular epimorphism and a monomorphism is an isomorphism.
\end{exercise}

In any variety of non-associative algebras, \defn{image factorisations} exist: any morphism $f\colon {A\to B}$ may be factored into a composite $m\circ p$ of a regular epimorphism $p\colon{A\to I}$ followed by a monomorphism $m\colon{I\to B}$. This monomorphism, unique up to isomorphism, is called the \defn{image of $f$}. By the above characterisations, we can simply take $I=f(A)$, with $m$ the canonical inclusion, and $p$ the corestriction of $f$ to its image. 

The general categorical context where image factorisations are usually defined is that of a \emph{regular category}. For this we need one last (very important) concept:

\begin{definition}\label{Def:Pullback}
A commutative square 
\[
\vcenter{\xymatrix{P \ar[d]_-{\pi_A} \ar[r]^-{\pi_C} & C \ar[d]^-g \\
A \ar[r]_-f & B}}
\qquad\qquad\qquad
\vcenter{\xymatrix{X \ar@/_/[ddr]_-a \ar@/^/[drr]^-c \ar@{-->}[rd]|-{\langle a,c\rangle} \\
&P \ar[d]^-{\pi_A} \ar[r]_-{\pi_C} & C \ar[d]^-g \\
&A \ar[r]_-f & B}}
\]
in a category $\C$ is called a \defn{pullback} (of $f$ and $g$) when for every pair of morphisms $a\colon {X\to A}$, $b\colon {X\to C}$ in $\C$ such that $f\circ a=g\circ c$ there exists a unique morphism $\langle a,c\rangle\colon X\to P$ such that $\pi_A\circ \langle a,c\rangle=a$ and $\pi_C\circ \langle a,c\rangle=c$. The object $P$ is then usually written as a $B$-indexed product $A\times _BC$. The morphism $\pi_A$ is called the \defn{pullback of $g$ along $f$}, and $\pi_C$ is called the \defn{pullback of $f$ along $g$}.
\end{definition}

\begin{example}[Kernels as pullbacks]
	If $\C$ is pointed, we may consider the special case where $C=0$. Then the square is a pullback precisely when $\pi_A\colon {P\to A}$ is a kernel of $f$.
\end{example}

\begin{example}[Products]\label{Ex:Products}
	If $\C$ has a terminal object $T$, we may consider the special case where $B=T$. If then the square is a pullback, $(P,\pi_A,\pi_B)$ is called a \defn{product} of $A$ and $C$.
\end{example}

\begin{example}[Pullbacks of non-associative algebras]\label{Ex:Pb-Alg}
In any variety of non-asso\-ciative algebras, any pair of arrows $f\colon{A\to B}$, $g\colon {C\to B}$ admits a pullback. We may let $P=A\times _BC$ be the set of couples 
\[
\{(a,c)\in A\times C\mid f(a)=g(c)\}
\]
and $\pi_A$, $\pi_C$ the canonical projections. The pointwise operations $\lambda(a,c)=(\lambda a,\lambda c)$, $(a,c)+(a',c')=(a+a',c+c')$ and $(a,c)\cdot(a',c')=(aa',cc')$ make $P$ into an algebra, and for any two algebra morphisms $a$ and $c$ as above, the map which sends $x\in X$ to the couple $(a(x),c(x))\in P$ is the needed unique morphism $\langle a,c\rangle$.
\end{example}

\begin{definition}
	A category in which pullbacks, a terminal object, and coequalisers exist is said to be a \defn{regular category} when any pullback of a regular epimorphism is again a regular epimorphism. 
\end{definition}

In other words, if in the square of Definition~\ref{Def:Pullback} the morphism $g$ is a regular epimorphism, then $\pi_A$ must be a regular epimorphism as well.

\begin{proposition}
	Any variety of non-associative algebras is a regular category.
\end{proposition}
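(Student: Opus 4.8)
The plan is to verify the four conditions in the definition of a regular category for an arbitrary variety of non-associative algebras $\V$: the existence of pullbacks, a terminal object, coequalisers, and the stability of regular epimorphisms under pullback. The first three are already in hand. Example~\ref{Ex:Pb-Alg} constructs pullbacks explicitly; the zero algebra is a terminal object by the discussion following Definition~\ref{Def:SES} (it is a zero object, in particular terminal); and Proposition~\ref{Prop:Existence-Coequalisers} gives coequalisers. So the entire content of the proof is the stability axiom: given a pullback square
\[
\xymatrix{P \ar[d]_-{\pi_A} \ar[r]^-{\pi_C} & C \ar[d]^-g \\
A \ar[r]_-f & B}
\]
with $g$ a regular epimorphism, I must show $\pi_A$ is a regular epimorphism as well.

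By Proposition~\ref{Prop:Surjections}, ``regular epimorphism'' is the same as ``surjection'' throughout, so it suffices to show that if $g$ is surjective then $\pi_A$ is surjective. First I would realise $P$ concretely as the subalgebra $\{(a,c)\in A\times C\mid f(a)=g(c)\}$ of Example~\ref{Ex:Pb-Alg}, with $\pi_A$ the first projection. Now take any $a\in A$. Since $f(a)\in B$ and $g$ is surjective, there exists $c\in C$ with $g(c)=f(a)$; then $(a,c)\in P$ and $\pi_A(a,c)=a$. Hence $\pi_A$ is surjective, and by Proposition~\ref{Prop:Surjections} it is a regular epimorphism. Finally, one should note that the concrete pullback of Example~\ref{Ex:Pb-Alg} is \emph{a} pullback, hence determines $\pi_A$ up to isomorphism over $A$; since being a surjection (equivalently, a regular epimorphism) is invariant under composition with isomorphisms, the conclusion is independent of the chosen model of the pullback.

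The argument is essentially a one-line set-theoretic surjectivity check, so there is no real obstacle; the only thing that requires a moment's care is the bookkeeping translating between the abstract universal-property pullback of Definition~\ref{Def:Pullback} and the concrete construction, and invoking Proposition~\ref{Prop:Surjections} in both directions (once to turn the hypothesis ``$g$ regular epi'' into ``$g$ surjective'', and once to turn ``$\pi_A$ surjective'' back into ``$\pi_A$ regular epi''). I would keep the proof to a couple of sentences, citing Example~\ref{Ex:Pb-Alg}, Proposition~\ref{Prop:Existence-Coequalisers}, and Proposition~\ref{Prop:Surjections}, and remarking that pullbacks and the terminal object have already been exhibited.
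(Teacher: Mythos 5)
Your proposal is correct and follows essentially the same route as the paper: reduce regular epimorphisms to surjections via Proposition~\ref{Prop:Surjections}, then check on the concrete pullback of Example~\ref{Ex:Pb-Alg} that for each $a\in A$ a preimage $(a,c)$ exists because $g$ is surjective. The extra remarks on the existence of pullbacks, the terminal object and coequalisers, and on independence of the chosen pullback model, are sound housekeeping that the paper leaves implicit.
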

\begin{proof}
By Proposition~\ref{Prop:Surjections}, it suffices to prove that pullbacks preserve surjections. So consider a pullback as in Definition~\ref{Def:Pullback} and assume that $g$ is a surjection. We then use the description in Example~\ref{Ex:Pb-Alg} to prove that also $\pi_A$ is a surjection. For $a\in A$, consider $c\in C$ such that $g(c)=f(a)$; then the couple $(a,c)$ is in $A\times_BC$, and $\pi(a,c)=a$.
\end{proof}

\begin{exercise}
Prove that $\Set$ is a regular category.
\end{exercise}

It is a theorem of Categorical Algebra that in any regular category, image factorisations exist. We shall encounter an example of how this is used in Lemma~\ref{Lem:RegepiReflection}. For varieties of algebras we of course already knew this. On the other hand, a category which is pointed, regular and protomodular is called a \defn{homological category}, and since we now know that all varieties of non-associative algebras are such, we can apply all categorical-algebraic results known to be valid for homological categories in any variety of non-associative algebras. One example is the Short Five Lemma given here below, others are famous homological diagram lemmas such as the $3\times 3$-Lemma, a version of Noether's isomorphism theorems, etc.

\begin{theorem}[Short Five Lemma]\label{Thm:SFL}
In a variety of non-associative algebras, consider a commutative diagram with horizontal short exact sequences. 
\[
\xymatrix{0 \ar[r] & A \ar[d]_-\alpha \ar[r]^-f & B \ar[d]^-\beta \ar[r]^-g & C \ar[d]^-\gamma \ar[r] & 0 \\
0 \ar[r] & D \ar[r]_-k & E \ar[r]_-q & F \ar[r] & 0}
\]
If $\alpha$ and~$\gamma$ are isomorphisms, then $\beta$ is an isomorphism as well.
\end{theorem}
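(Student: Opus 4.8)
The plan is to deduce the Short Five Lemma from the Split Short Five Lemma (Theorem~\ref{Thm:Proto}) by exploiting the fact that every short exact sequence in a variety of non-associative algebras is, underneath, a short exact sequence of vector spaces --- and there, the epimorphism $g$ splits. Concretely: since $g\colon B\to C$ is a cokernel, it is a surjection (Proposition~\ref{Prop:Surjections}), hence a split epimorphism of underlying vector spaces, so there is a \emph{linear} (not necessarily multiplicative) section $s\colon C\to B$ with $g\circ s=1_C$; likewise pick a linear section $t\colon F\to E$ of $q$. These sections need not be algebra maps, so Theorem~\ref{Thm:Proto} does not apply directly. The remedy is to observe that the proof of Theorem~\ref{Thm:Proto} only ever uses the additive group structure together with the relations $f=\ker g$, $k=\ker q$, $g s=1$, $q t=1$, and commutativity of the three squares --- and the square $\beta\circ s=t\circ\gamma$ can be arranged.

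First I would record that the underlying diagram of $\K$-vector spaces, with the linear sections $s$ and $t$ adjoined, still has commuting squares $\beta\circ f=k\circ\alpha$ and $q\circ\beta=\gamma\circ g$. The third square $\beta\circ s=t\circ\gamma$ is \emph{not} automatic, but we do not need it: given $c\in C$, the elements $\beta(s(c))$ and $t(\gamma(c))$ both map to $\gamma(c)$ under $q$ (using $q\beta=\gamma g$ and $gs=1$ on the left, $qt=1$ on the right), so they differ by an element of $\ker q=k(D)$; this is all that the surjectivity argument in the proof of Theorem~\ref{Thm:Proto} actually requires, once one tracks it carefully. So the cleanest route is: re-run the two paragraphs of the proof of Theorem~\ref{Thm:Proto} verbatim at the level of underlying abelian groups (or vector spaces), using the linear sections $s$, $t$, and the kernel relations $f=\ker g$, $k=\ker q$ which hold in $\Alg_\K$ hence a fortiori pointwise. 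That argument shows $\beta$ is a bijection. Finally, invoke Lemma~\ref{Lem:Iso}: a bijective morphism of non-associative algebras is an isomorphism. Hence $\beta$ is an isomorphism.

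The main obstacle is the one already flagged: the vector-space sections $s$ and $t$ are not algebra morphisms, so Theorem~\ref{Thm:Proto} is not a black box here --- one must be honest that what is really being reused is the \emph{proof} of Theorem~\ref{Thm:Proto}, which (as the remark following it points out) only used the additive group structure. The paper more or less invites this: ``we are only using the (additive) group structure of the algebras \dots the result is valid in contexts which are much more general''. An alternative that avoids re-opening that proof is to cite the general categorical fact that a pointed, regular, protomodular category is homological and that the (non-split) Short Five Lemma holds in any homological category --- the excerpt explicitly says ``others are famous homological diagram lemmas such as \dots''. If the author prefers to keep the notes self-contained, I would instead spell out the four-line bijectivity argument directly, as sketched above, and close with Lemma~\ref{Lem:Iso}; that is short enough not to need any display-math and carries no routine computation worth grinding through.
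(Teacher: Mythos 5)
Your proof is correct, but it follows a genuinely different route from the paper: the paper gives no argument for Theorem~\ref{Thm:SFL} at all, presenting it as an instance of a general fact valid in any homological (pointed, regular, protomodular) category, which the surrounding text has just shown every variety of non-associative algebras to be. You instead give a direct, self-contained proof: since $g$ and $q$ are cokernels they are surjections (Proposition~\ref{Prop:Surjections}), hence admit \emph{linear} sections $s$ and $t$, and the proof of Theorem~\ref{Thm:Proto} can be replayed at the level of underlying vector spaces. You correctly identify the only delicate point, namely that $\beta\circ s=t\circ\gamma$ need not hold; your fix is right, and in fact slightly simpler than you state: for $e\in E$ put $c=\gamma^{-1}(q(e))$, note $q(e-\beta(s(c)))=q(e)-\gamma(g(s(c)))=0$, so $e-\beta(s(c))=k(d)$ for some $d\in D$, and then $\beta(f(\alpha^{-1}(d))+s(c))=e$ --- the section $t$ is never needed for surjectivity, and the injectivity half of the proof of Theorem~\ref{Thm:Proto} uses neither section. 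Concluding with Lemma~\ref{Lem:Iso} is exactly right. What your approach buys is self-containedness within the notes; what the paper's approach buys is generality, since the categorical Short Five Lemma holds in homological categories where no underlying-vector-space splitting is available. If you wrote this up, I would spell out the four-line surjectivity computation above rather than saying ``once one tracks it carefully'', since that is the one place a reader could suspect a gap.
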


We are finally ready to extend Definition~\ref{Def:SES} to arbitrary exact sequences, which are the basic building blocks of Homological Algebra.

\begin{definition}
	In a homological category, a pair $(f\colon {A\to B},g\colon {B\to C})$ of composable morphisms is called an \defn{exact sequence} when the image of $f$ is a kernel of~$g$. A long sequence of composable morphisms is said to be \defn{exact} when any pair of consecutive morphisms is exact. 
\end{definition}

\begin{example}
	A sequence of morphisms such as \eqref{SES} on page~\pageref{SES} is exact if and only if it is a short exact sequence, which explains the notation. Exactness in $C$ means that the image of $g$ is an isomorphism, so that $g$ is a surjection. Exactness in $A$ means that the kernel of $f$ is zero, which makes $f$ an injection. Now exactness in~$B$ says that~$f$ is the kernel of $g$, so that $g$ is the cokernel of $f$.
\end{example}

\section{Semi-abelian categories}
\emph{Semi-abelian categories} were introduced by Janelidze--Márki--Tholen in 2002 in order to unify ``old'' approaches towards an axiomatisation of categories ``close to the category of groups'' such as the work of Higgins (1956) and Huq (1968) with ``new'' categorical algebra---the concepts of \emph{Barr-exactness} and \emph{Bourn-protomodularity}. Our aim is now to prove that all varieties of non-associative algebras are semi-abelian categories.

According to example Example~\ref{Ex:Products}, a \defn{product} of two objects $A$ and $C$ is a triple $(A\times C,\pi_A,\pi_C)$ that satisfies a universal property: given any pair of morphisms $a\colon X\to A$, $c\colon X\to C$, there exists a unique morphism $\langle a,c\rangle\colon {X\to A\times C}$ such that $\pi_A\circ \langle a,c\rangle=a$ and $\pi_C\circ \langle a,c\rangle=c$. Example~\ref{Ex:Pb-Alg} told us that in a variety of non-associative algebras, the product of two objects always exists, and may be obtained as the algebra of pairs $(a,c)$ where $a\in A$ and $c\in C$.

The ``dual'' concept is that of a \emph{coproduct}. It may be defined as a \emph{pushout}, which is the concept dual to that of a pullback, and has cokernels for one type of examples. The direct definition goes as follows.

\begin{definition}[Coproducts]
A \defn{coproduct} or \defn{sum} of two objects $A$ and $C$ is a triple $(A+ C,\iota_A,\iota_C)$ that satisfies the following universal property:
\[
\xymatrix{A \ar[r]^-{\iota_A} \ar[rd]_-{a} & A+C \ar@{-->}[d]|-{\left\lgroup a\;c\right\rgroup} & C \ar[l]_-{\iota_C} \ar[ld]^-{c}\\
& X}
\]
given any pair of morphisms $a\colon {A\to X}$, $c\colon {C\to X}$, there exists a unique morphism $\left\lgroup a\;c\right\rgroup\colon {A+C\to X}$ such that $\left\lgroup a\;c\right\rgroup\circ \iota_A=a$ and $\left\lgroup a\;c\right\rgroup\circ \iota_C=c$.
\end{definition}

\begin{example}
In the category $\Set$, the coproduct of two sets is their disjoint union.
\end{example}

\begin{example}
In the variety $\Alg_\K$, the coproduct of two $\K$-algebras $A$ and $C$ is obtained as follows. Let $R(B)$ denote the kernel of the morphism of algebras $\varepsilon_B\colon{\Free{B}\to B}$ which sends an element $b$ of $B$ to itself. In the free algebra~$\Free{A\dot\cup C}$ on the disjoint union $A\dot\cup C$ of $A$ and $C$, consider the ideal $J$ generated by the set $R(A)\dot\cup R(C)$. We claim that the quotient $\Free{A\dot\cup C}/J$, together with the morphisms
\[
\iota_A\colon{ A\to \Free{A\dot\cup C}/J}
\qquad\text{and}\qquad
\iota_C\colon {C\to \Free{A\dot\cup C}/J}
\]
induced by the respective inclusions of $A$ and $C$ into $A\dot\cup C$, is a coproduct of~$A$ and~$C$. Let us first show that $\iota_A$ is a morphism: for any two elements $a_1$ and $a_2$ of~$A$, the difference $a_1\cdot a_2-a_1a_2$ in $\Free{A\dot\cup C}$ of the product of $a_1$ with $a_2$ in $\Free{A\dot\cup C}$ and their product in $A$, viewed as an element of $\Free{A\dot\cup C}$, is an element of $R(A)$. The same proof works for $\iota_C$.

By Lemma~\ref{Lem:Generated-Ideal}, we now only need to prove that the arrow ${\Free{A\dot\cup C}\to X}$ sending the elements of~$A$ and~$C$ to their images through $a$ and $c$ vanishes on the elements of $R(A)$ and~$R(C)$: then it will automatically vanish on all of $J$, and thus factor through the quotient $\Free{A\dot\cup C}/J$. This, however, is immediate from the definitions of~$R(A)$ and~$R(C)$. 

A typical element of $A+C$ is thus a polynomial with variables in $A$ and $C$, for instance an expression of the form $(a_1a_2)c$. \emph{A priori} this may be interpreted in two distinct ways: either as a product $a_1\cdot a_2$ in $\Free{A\dot\cup C}$ of two elements $a_1$ and $a_2$ of~$A$, multiplied with the element $c$ of $C$; or as a product of the element $a_1a_2$ of $A$ with the element $c$ of~$C$. The quotient over $J$ ensures that these two points of view agree: $a_1\cdot a_2-a_1a_2$ is an element of $R(A)$, so $(a_1\cdot a_2-a_1a_2)c$ is in the ideal $J$.
\end{example}

\begin{proposition}
	In a variety of algebras $\V$ over a field $\K$, the coproduct of two algebras $A$ and $C$ always exists, and is obtained as the reflection into $\V$ of the sum $A+C$ in $\Alg_\K$.
\end{proposition}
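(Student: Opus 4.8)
The plan is to combine the reflection $L\colon\Alg_\K\to\V$ of Proposition~\ref{Prop:Reflective} with the coproduct $A+C$ in $\Alg_\K$ constructed explicitly above, and to check by hand that $L(A+C)$, together with the composites $\eta_{A+C}\circ\iota_A$ and $\eta_{A+C}\circ\iota_C$, enjoys the universal property of the coproduct of $A$ and $C$ computed \emph{inside} $\V$; here $\iota_A$, $\iota_C$ are the coproduct injections in $\Alg_\K$ and $\eta_{A+C}\colon A+C\to L(A+C)$ is the unit of the adjunction from Proposition~\ref{Prop:Reflective}. Since $A$ and $C$ are objects of $\V\subseteq\Alg_\K$, both $A+C$ and its reflection $L(A+C)$ are available, and the composites $\eta_{A+C}\circ\iota_A$, $\eta_{A+C}\circ\iota_C$ are morphisms of $\V$ because $L(A+C)$ is a $\V$-algebra and algebra morphisms compose.

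For the universal property, fix a $\V$-algebra $B$ and morphisms $a\colon A\to B$, $c\colon C\to B$; because $\V$ is a full subcategory of $\Alg_\K$, these coincide with morphisms in $\Alg_\K$. First, the universal property of the coproduct $A+C$ in $\Alg_\K$ yields a unique $\Alg_\K$-morphism $u\colon A+C\to B$ with $u\circ\iota_A=a$ and $u\circ\iota_C=c$. Next, regarding $u$ as a morphism from $A+C$ into the $\V$-algebra $B$, the universal property of the reflection (Definition~\ref{Def:Adj}) factors it uniquely as $u=\bar u\circ\eta_{A+C}$ for a morphism $\bar u\colon L(A+C)\to B$ in $\V$. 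This $\bar u$ does the job: $\bar u\circ(\eta_{A+C}\circ\iota_A)=u\circ\iota_A=a$, and likewise $\bar u\circ(\eta_{A+C}\circ\iota_C)=c$.

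It remains to check uniqueness of $\bar u$, which is the only slightly delicate point, since it chains together the two universal properties just used. Suppose $h\colon L(A+C)\to B$ in $\V$ satisfies $h\circ\eta_{A+C}\circ\iota_A=a$ and $h\circ\eta_{A+C}\circ\iota_C=c$. Then the $\Alg_\K$-morphism $h\circ\eta_{A+C}\colon A+C\to B$ composes with $\iota_A$ and $\iota_C$ to give back $a$ and $c$, so by the uniqueness clause of the coproduct in $\Alg_\K$ we get $h\circ\eta_{A+C}=u$; then by the uniqueness clause of the reflection, $h=\bar u$. Hence $L(A+C)$ is a coproduct of $A$ and $C$ in $\V$. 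Conceptually, this is just the standard fact that left adjoints preserve colimits, together with the observation that $L$ fixes $A$ and $C$ up to isomorphism since they already lie in $\V$ (their $T$-ideal image is zero); the argument above merely unwinds this in the present case, so the only real care needed is to keep the two universal properties distinct.
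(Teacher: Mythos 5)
Your argument is correct and is precisely the unwinding of the paper's one-line proof, which simply asserts that the result ``follows immediately from the definitions of sums and reflections'': you chain the universal property of the coproduct in $\Alg_\K$ with that of the reflection $L$, exactly as intended. The closing remark about left adjoints preserving colimits is a fine conceptual summary, though the observation that $L$ fixes $A$ and $C$ is not actually needed for the verification.
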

\begin{proof}
	This follows immediately from the definitions of sums and reflections.
\end{proof}

We are still missing one piece of terminology which is needed for the definition of a semi-abelian category. A regular category is said to be \defn{Barr exact} when every internal equivalence relation is a kernel pair. A \defn{semi-abelian category} (in the sense of Janelidze--Márki--Tholen) is then a homological category which is Barr exact and where the coproduct of any two objects exist.

We didn't introduce internal equivalence relation, though, but we can avoid those by proving that all varieties of non-associative algebras satisfy a condition which is equivalent to Barr exactness in any homological category: \emph{the direct image of a kernel along a regular epimorphism is always a kernel}. This means that whenever we have a regular epimorphism $f\colon{A\to B}$ and a normal monomorphism $k\colon{K\to A}$, 
\[
\xymatrix{K \ar[r]^-p \ar[d]_-k & I \ar[d]^-i \\
A \ar[r]_-f & B}
\]
the image $i\colon I\to B$ of the composite $f\circ k$ is again a normal monomorphism.

\begin{theorem}
Any variety of non-associative algebras is a semi-abelian category.
\end{theorem}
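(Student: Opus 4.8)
The plan is to assemble the structural results already proved and match them against the definition of a semi-abelian category stated just above. We have seen that any variety of non-associative algebras $\V$ is pointed (the zero algebra is a zero object), possesses all kernels and cokernels, is Bourn protomodular (Theorem~\ref{Thm:Proto}), and is a regular category; hence it is a homological category, and we have also shown that binary coproducts exist. Since a semi-abelian category is by definition a homological category which admits binary coproducts and is Barr exact, the only remaining point is Barr exactness. Following the paragraph preceding the statement, rather than arguing with internal equivalence relations I would verify the condition shown there to be equivalent to Barr exactness in any homological category: \emph{the direct image of a kernel along a regular epimorphism is again a kernel}.

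Concretely, this means: given a regular epimorphism $f\colon A\to B$ and a normal monomorphism $k\colon K\to A$, the image $i\colon I\to B$ of the composite $f\circ k$ must be a normal monomorphism. By Proposition~\ref{Prop:Surjections} the morphism $f$ is a surjection, and by Proposition~\ref{Prop:Quotient-of-Ideal} we may take $K$ to be an ideal of $A$ with $k$ its inclusion; since images in $\V$ are computed as set-theoretic images, we have $I=f(K)\subseteq B$ with $i$ the inclusion. So the whole claim reduces to the statement that \emph{if $f\colon A\to B$ is a surjective algebra morphism and $K$ is an ideal of $A$, then $f(K)$ is an ideal of $B$}, which is the step I would carry out in detail. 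One checks that $f(K)$ is a subspace of $B$ because $f$ is linear, that it is closed under multiplication because $f(x)f(x')=f(xx')$ with $xx'\in K$, and that for $b\in B$ and $y=f(x)\in f(K)$, surjectivity of $f$ yields $a\in A$ with $f(a)=b$, so that $by=f(a)f(x)=f(ax)\in f(K)$ since $ax\in AK\subseteq K$, and symmetrically $yb\in f(K)$. Thus $f(K)$ is an ideal, and Proposition~\ref{Prop:Quotient-of-Ideal} makes $i\colon f(K)\to B$ a normal monomorphism, as needed.

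I do not expect a genuine obstacle here; the argument is really a matter of bookkeeping together with the short ideal computation. The one place where a hypothesis is essential is the surjectivity of $f$: it is exactly what allows an arbitrary multiplier $b\in B$ to be lifted to $A$, so that the ideal property of $K$ can be applied — without it $f(K)$ would only be guaranteed to be a subalgebra. Everything else amounts to citing Theorem~\ref{Thm:Proto}, regularity, the existence of coproducts, and the stated equivalence between the direct-image condition and Barr exactness in a homological category, after which the definition of a semi-abelian category is satisfied.
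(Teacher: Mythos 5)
Your proposal is correct and follows essentially the same route as the paper: reduce semi-abelianness to the direct-image condition for Barr exactness and then check, using surjectivity of $f$ to lift multipliers from $B$ to $A$, that the image of an ideal under a surjective algebra morphism is again an ideal. The only difference is that you spell out a few bookkeeping steps (subspace, closure under multiplication, both sides of the ideal condition) that the paper leaves implicit.
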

\begin{proof}
We only need to prove that the direct image of an ideal along a surjective algebra morphism is not just any subalgebra, but again an ideal. Let us consider the commutative square above, where $K$ is an ideal of $A$ and $I=f(K)$. Since~$f$ is surjective, for any $b\in B$ there is an $a\in A$ such that $f(a)=b$. Now $bI=f(a)f(K)=f(aK)\subseteq f(K)=I$.
\end{proof}

\begin{exercise}
	The requirement that $f$ is a regular epimorphism is essential here. Give an example where $k$ is an ideal, but $i$ is not. \emph{Hint:} Use Example~\ref{Ex:AssocPoly}. 
\end{exercise}

As a consequence, typical constructions and results, valid in semi-abelian categories, hold in any variety of non-associative algebras. Examples are, for instance, the snake lemma, or the fact that homology of simplicial objects is captured by a Quillen model structure. Others are results in radical theory, commutator theory, or cohomology.

\section{Birkhoff subcategories}

We find a simple version of a famous theorem by Birkhoff.

\begin{definition}
A \defn{Birkhoff subcategory} $\D$ of a semi-abelian category $\C$ is a full reflective subcategory, closed under subobjects and quotients.

\defn{Closure under subobjects} means that whenever we have a monomorphism $m\colon {M\to D}$ in $\C$ where $D$ is an object of $\D$, the object $M$ is also in $\D$. \defn{Closure under quotients} means that whenever we have a regular epimorphism $q\colon {D\to Q}$ in $\C$ where $D$ is an object of $\D$, the object $Q$ is also in $\D$. 
\end{definition}

\begin{lemma}\label{Lem:RegepiReflection}
Let $\D$ be a full reflective subcategory of a regular category $\C$. Let $L\colon{\C\to \D}$ be the left adjoint of the inclusion functor. Then $\D$ is closed under subobjects in $\C$ if and only if each component $\eta_C\colon{C\to L(C)}$ of the unit $\eta$ of the adjunction is a regular epimorphism.
\end{lemma}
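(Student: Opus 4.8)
The plan is to prove the two implications separately, using in both directions that in a regular category any morphism factors as a regular epimorphism followed by a monomorphism, and that the objects of $\D$ are exactly those $C$ for which $\eta_C$ is an isomorphism (since $\D$ is full and reflective, $\eta_C$ is an isomorphism precisely when $C$ lies in $\D$, up to isomorphism). For the ``if'' direction, suppose every unit component is a regular epimorphism, and let $m\colon M\to D$ be a monomorphism with $D$ an object of $\D$. Applying the reflection and naturality of $\eta$ gives a commutative square relating $\eta_M$, $\eta_D$, $m$ and $L(m)$; since $D$ is in $\D$, the component $\eta_D$ is an isomorphism, so from $\eta_D\circ m=L(m)\circ\eta_M$ we see that $m$ factors through $\eta_M$ via the morphism $\eta_D^{-1}\circ L(m)\colon L(M)\to D$. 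Because $m$ is a monomorphism and $\eta_M$ is (by hypothesis) a regular epimorphism, the standard fact that a morphism which is simultaneously a regular epimorphism and a monomorphism is an isomorphism (the exercise just before Definition \ref{Def:Pullback}'s neighbourhood) forces $\eta_M$ to be an isomorphism; indeed $\eta_M$ is a monomorphism because it has a retraction-like factor, being a left factor of the monomorphism $m$ (up to the isomorphism $\eta_D$). Hence $M$ lies in $\D$.

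For the ``only if'' direction, assume $\D$ is closed under subobjects in $\C$, and fix an object $C$ of $\C$. Factor the unit component $\eta_C\colon C\to L(C)$ in the regular category $\C$ as $\eta_C=n\circ e$, where $e\colon C\to J$ is a regular epimorphism and $n\colon J\to L(C)$ is a monomorphism, $J$ the image of $\eta_C$. Since $L(C)$ is an object of $\D$ and $n$ is a monomorphism, closure under subobjects gives that $J$ is an object of $\D$ as well. Now the universal property of the reflection applied to the morphism $e\colon C\to J$ (with $J\in\D$) yields a unique $e'\colon L(C)\to J$ with $e'\circ\eta_C=e$; composing with $n$ we get $n\circ e'\circ\eta_C=n\circ e=\eta_C=1_{L(C)}\circ\eta_C$, and uniqueness in the universal property forces $n\circ e'=1_{L(C)}$. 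Thus $n$ is a split monomorphism with retraction $e'$; but $n$ is also a monomorphism, and from $n\circ e'=1$ together with $n\circ(e'\circ n)=n=n\circ 1_J$ and the monomorphism property of $n$ we get $e'\circ n=1_J$, so $n$ is an isomorphism. Therefore $\eta_C=n\circ e$ is a regular epimorphism, being the composite of a regular epimorphism with an isomorphism.

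The main obstacle I anticipate is bookkeeping around the two small ``isomorphism from one-sided data'' arguments: in the ``if'' direction one must correctly identify $\eta_M$ as a monomorphism (it is a factor of a monomorphism, through an isomorphism) before invoking ``regular epi + mono $\Rightarrow$ iso'', and in the ``only if'' direction one must pin down that the split mono $n$ is in fact two-sided invertible. Both are routine once the factorisations and naturality squares are drawn, but they are the points where a careless argument could go wrong. Everything else is a direct application of the universal property of the reflection $L\dashv {\subseteq}$, the existence of regular image factorisations in $\C$, and the characterisation of objects of $\D$ as fixed points of $\eta$.
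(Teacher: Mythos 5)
Your proposal is correct and follows essentially the same route as the paper's proof: for ``only if'' you take the regular epi--mono factorisation of $\eta_C$, use closure under subobjects and the universal property of the reflection to show the mono part is invertible, and for ``if'' you use the naturality square for $m\colon M\to D$, the fact that $\eta_D$ is an isomorphism, and ``regular epi $+$ mono $\Rightarrow$ iso'' applied to $\eta_M$. The only difference is that you spell out the two invertibility arguments that the paper leaves implicit, which is a welcome addition rather than a deviation.
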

\begin{proof}
$\Rightarrow$ Factor $\eta_C\colon{C\to L(C)}$ as a regular epimorphism $p\colon C\to M$ followed by a monomorphism $m\colon{M\to L(C)}$ in $\C$. Then closure under subobjects tells us that $M$ is an object of $\D$, which by the universal property of $L$ gives us a unique morphism $p'\colon L(C)\to M$ such that $p=p'\circ \eta_{C}$. We see that $p'$ is an inverse to $m$, hence $m$ is an isomorphism.

$\Leftarrow$ Let $D$ be an object of $\D$ and $m\colon M\to D$ a monomorphism in $\C$. Applying the functor $L$, we find the commutative square
\[
\xymatrix{M \ar[r]^-m \ar[d]_-{\eta_M} & D \ar[d]^-{\eta_D} \\
L(M) \ar[r]_-{L(m)} & L(D)}
\]
in $\C$. It is easy to check that $\eta_D$ is an isomorphism; hence $\eta_M$ is both a monomorphism and a regular epimorphism, so that it is an isomorphism. It follows that~$M$ is an object of $D$.
\end{proof}

\begin{exercise}
	Prove that if $\D$ is a Birkhoff subcategory of $\C$ and $\E$ is a subcategory of $\D$, then $\E$ is a Birkhoff subcategory of $\D$ iff it is a Birkhoff subcategory of $\C$.
\end{exercise}

\begin{theorem}[Birkhoff]
A variety of non-associative $\K$-algebras $\V$ is the same thing as a Birkhoff subcategory of $\Alg_\K$. 
\end{theorem}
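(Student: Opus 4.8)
The plan is to prove the equivalence in both directions. For the implication ``a variety of non-associative $\K$-algebras is a Birkhoff subcategory of $\Alg_\K$'', fix such a variety $\V$, determined by a set of polynomials $I\subseteq\Free{X}$. It is full by Definition~\ref{Def:Variety} and reflective by Proposition~\ref{Prop:Reflective}, so only the two closure conditions remain. For closure under subobjects, a monomorphism $M\to D$ with $D\in\V$ lets us regard $M$ as a subalgebra of $D$, and for $\psi\in I$ and $m_1,\dots,m_n\in M$ the value $\psi(m_1,\dots,m_n)$ is the same computed in $M$ or in $D$, hence $0$; so $M\in\V$. For closure under quotients, let $q\colon D\to Q$ be a regular epimorphism, that is --- by Proposition~\ref{Prop:Surjections} --- a surjection, with $D\in\V$; lifting $b_1,\dots,b_n\in Q$ along $q$ to $a_1,\dots,a_n\in D$ and applying Lemma~\ref{Lem:Subst} yields $\psi(b_1,\dots,b_n)=q(\psi(a_1,\dots,a_n))=q(0)=0$, so $Q\in\V$.

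For the converse, let $\D$ be a Birkhoff subcategory of $\Alg_\K$, with reflector $L\colon\Alg_\K\to\D$ and unit $\eta$, and let $\V$ be the variety of $\K$-algebras determined by the set of all polynomials that are identities of every algebra in $\D$. Then $\D\subseteq\V$ holds tautologically, and the real content is the reverse inclusion $\V\subseteq\D$, which I would establish by an argument in the spirit of Birkhoff's HSP theorem. Fix $A\in\V$ and let $\varepsilon_A\colon\Free{A}\to A$ be the canonical surjection from the free $\K$-algebra on the underlying set of $A$, with $\varepsilon_A(x_a)=a$ for the generator $x_a$ attached to $a\in A$. The key claim is that $\Ker(\eta_{\Free{A}})\subseteq\Ker(\varepsilon_A)$. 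To see it, take $w\in\Ker(\eta_{\Free{A}})$ and write $w=\omega(x_{a_1},\dots,x_{a_n})$ for distinct elements $a_1,\dots,a_n\in A$ and a polynomial $\omega=\omega(x_1,\dots,x_n)\in\Free{X}$. For any $D\in\D$ and any $d_1,\dots,d_n\in D$, the assignment $x_{a_i}\mapsto d_i$ (and every other generator to $0$) extends to an algebra morphism $g\colon\Free{A}\to D$, which factors through $\eta_{\Free{A}}$ by the universal property of the reflection; hence $\omega(d_1,\dots,d_n)=g(w)=0$ by Lemma~\ref{Lem:Subst}. Thus $\omega$ is an identity of every algebra of $\D$, so $A$ satisfies it, whence $\varepsilon_A(w)=\omega(a_1,\dots,a_n)=0$. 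This proves the claim. Since $\D$ is closed under subobjects, Lemma~\ref{Lem:RegepiReflection} shows that each unit component is a regular epimorphism, so $L(\Free{A})$ is the quotient $\Free{A}/\Ker(\eta_{\Free{A}})$; by the claim, the surjection $\varepsilon_A$ factors through it as $\varepsilon_A=\overline{\varepsilon}_A\circ\eta_{\Free{A}}$ with $\overline{\varepsilon}_A\colon L(\Free{A})\to A$ again surjective. As $L(\Free{A})\in\D$ and $\D$ is closed under quotients, $A\in\D$; together with $\D\subseteq\V$ this gives $\D=\V$.

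I expect the inclusion $\V\subseteq\D$ to be the main obstacle, and inside it the point that $\Ker(\eta_{\Free{A}})$ consists exactly of polynomials whose ``abstract form'' $\omega$ is an identity of $\D$ --- this is what forces $\varepsilon_A$ to factor through the reflection, and it works precisely because the elements of $A$ supply enough substitution morphisms $\Free{A}\to D$ into the algebras of $\D$ to detect those identities. The remaining steps --- fullness, reflectivity, the two closure conditions for the first implication, the description of $\V$, and the surjectivity of the unit components via Lemma~\ref{Lem:RegepiReflection} --- are routine consequences of the material already developed. One may also note, using naturality of $\eta$ under substitution endomorphisms of $\Free{X}$, that $\Ker(\eta_{\Free{X}})$ is a $T$-ideal coinciding with the set of identities cut out above, which reconciles this statement with the earlier bijection between varieties and $T$-ideals.
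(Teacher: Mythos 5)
Your proof is correct, and both directions follow the same overall strategy as the paper; the difference worth noting is in how the converse is organised. The paper fixes the countable variable set $X$, takes $I=\Ker(\eta_{\Free{X}})$ as the defining set of identities, and then, to show that an algebra $A$ satisfying $I$ lies in $\D$, must transfer each element $\psi(a_1,\dots,a_n)$ of $\Ker(\eta_{\Free{A}})$ back to a polynomial in $\Free{X}$; this is done via a three-row diagram of split monomorphisms $\Free{Y}\to\Free{A}$ and $\Free{Y}\to\Free{X}$, using that the functor $L$ preserves split monomorphisms. You instead define $\V$ by the set of \emph{all} polynomials that are identities of every algebra of $\D$, which makes the inclusion $\D\subseteq\V$ tautological and reduces everything to your key claim $\Ker(\eta_{\Free{A}})\subseteq\Ker(\varepsilon_A)$; that claim is then proved by a direct substitution argument (every $D\in\D$ and every tuple $d_1,\dots,d_n$ gives a morphism $\Free{A}\to D$ factoring through the unit), with no diagram chase. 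Both proofs then conclude identically: the unit is a regular epimorphism by Lemma~\ref{Lem:RegepiReflection}, so $\varepsilon_A$ factors through $L(\Free{A})$, and closure under quotients gives $A\in\D$. Your route buys a cleaner converse at the small cost of working with the full set of identities of $\D$ rather than the single $T$-ideal $\Ker(\eta_{\Free{X}})$ (your closing remark correctly reconciles the two). In the forward direction you also prove closure under subobjects directly from the fact that subalgebras inherit identities, where the paper deduces it from Lemma~\ref{Lem:RegepiReflection} and the surjectivity of the unit components; both are valid, and yours is the more elementary of the two.
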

\begin{proof}
Given a variety on non-associative $\K$-algebras $\V$, it is reflective by Proposition~\ref{Prop:Reflective} and closed under quotients by Proposition~\ref{Prop:Cokernel}. Closure under subobjects is a consequence of Lemma~\ref{Lem:RegepiReflection} and the first step in the proof of Proposition~\ref{Prop:Reflective}.

For a proof of the converse, let $L\colon {\Alg_\K\to \V}$ denote the left adjoint to the inclusion functor $\V\to \Alg_\K$. Fix a countable set of variables $X=\{x_1,x_2,\dots,x_n,\dots\}$ and take the free $\K$-algebra $\Free{X}$. Then the induced morphism
\[
\eta_{\Free{X}}\colon {\Free{X}\to L(\Free{X})}
\]
is a regular epimorphism by Lemma~\ref{Lem:RegepiReflection}. Taking the kernel of $\eta_{\Free{X}}$, we find a short exact sequence
\[
\xymatrix{0 \ar[r] & I \ar[r] & \Free{X} \ar[r]^-{\eta_{\Free{X}}} & L(\Free{X}) \ar[r] & 0\text{.}}
\] 
We shall prove that the set of polynomials $I\subseteq \Free{X}$ determines~$\V$. That is to say, a non-associative $\K$-algebra $A$ is in $\V$ if and only if it satisfies the identities in $I$. 

First suppose that $A$ is in $\V$. Let $\psi(x_1,\dots,x_n)$ be a polynomial in $I$, and consider elements $a_1$, \dots, $a_n$ of $A$. The morphism of $\K$-algebras $z\colon \Free{X}\to A$ which sends $x_i$ to $a_i$ if $0\leq i\leq n$ and to $0$ otherwise necessarily factors through $L(\Free{X})$ as $z=z'\circ \eta_{\Free{X}}$, since $A$ is in $\V$. Hence $\psi(a_1,\dots, a_n)=z(\psi )=z'(\eta_{\Free{X}}(\psi))=0$, because $\psi$ is in the kernel of $\eta_{\Free{X}}$.

Conversely, suppose that $A$ is a $\K$-algebra which satisfies the identities in $I$. We consider the surjective morphism $\varepsilon_A\colon \Free{A}\to A$ which sends $a\in A$ to itself, as well as the short exact sequence 
\[
\xymatrix{0 \ar[r] & J(A) \ar[r]^-{\kappa_A} & \Free{A} \ar[r]^-{\eta_{\Free{A}}} & L(\Free{A}) \ar[r] & 0\text{.}}
\] 
We prove that $\varepsilon_A\circ \kappa_A=0$; then $\varepsilon_A$ factors over $\eta_{\Free{A}}$ as a regular epimorphism $\varepsilon'_A\colon{L(\Free{A})\to A}$ such that $\varepsilon'_A\circ\eta_{\Free{A}}=\varepsilon_A$. It follows that $A$ is a quotient of~$L(\Free{A})$, hence an object of $\V$.

Consider an element of $J(A)$: it is a polynomial $\psi(a_1,\dots,a_n)$ in $n$ variables $a_1$, \dots, $a_n\in A$. Let $Y\subseteq X$ be the subset $\{x_1,\dots,x_n\}$ of $X$ and consider the morphism of $\K$-algebras $y\colon \Free{Y}\to \Free{A}$ which sends $x_i$ to $a_i$ for $0\leq i\leq n$. In particular, it sends $\psi=\psi(x_1,\dots,x_n)$ to $\psi(a_1,\dots,a_n)$. Note that it is a split monomorphism, because any functor preserves split monomorphisms; a splitting may be defined which sends $a_i$ to $x_i$ and all other $a\in A$ to $0$. On the other hand, the inclusion of $Y$ into~$X$ induces a split monomorphism $\Free{Y}\to \Free{X}$. Since $L$ is a functor, we find the following two vertical split monomorphisms of short exact sequences in $\Alg_\K$.
\[
\xymatrix{0 \ar[r] & J(A) \ar[r]^-{\kappa_A} & \Free{A} \ar[r]^-{\eta_{\Free{A}}} & L(\Free{A}) \ar[r] & 0\\
0 \ar[r] & K \ar[u] \ar[d] \ar[r] & \Free{Y} \ar[u]^-y \ar[d] \ar[r]_-{\eta_{\Free{Y}}} & L(\Free{Y}) \ar[u]_-{L(y)} \ar[d] \ar[r] & 0\\
0 \ar[r] & I \ar[r] & \Free{X} \ar[r]_-{\eta_{\Free{X}}} & L(\Free{X}) \ar[r] & 0}
\] 
We have that $\psi(a_1,\dots,a_n)=y(\psi)$, so
\[
0=\eta_{\Free A}(\psi(a_1,\dots,a_n))=L(y)(\eta_{\Free Y}(\psi)).
\]
Since $L(y)$ is an injection, $\psi$ is an element of $K$. Hence it is also in $I$. Taking $z\colon \Free{X}\to A$ as above and $z'\colon \Free{X}\to \Free{A}$ the morphism defined by the same rules, we have
\[
\varepsilon_A(\kappa_A(\psi(a_1,\dots,a_n)))=\varepsilon_A(z'(\psi))=z(\psi)=\psi(a_1,\dots,a_n)= 0\text{,}
\]
because $\psi$ is in $I$ and $I(A)=0$. (Note that the $\psi(a_1,\dots,a_n)$ on the left is a polynomial in the variables $a_1$, \dots, $a_n$, while the $\psi(a_1,\dots,a_n)$ on the right is an element of $A$: the evaluation of this polynomial.) It follows that $A$ is in $\V$.
\end{proof}

\begin{example}
The trivial variety is determined by $I=\Free{X}$, while $\Alg_\K$ is determined by $I=0$. 

Any variety of non-associative $\K$-algebras contains the variety $\AbAlg_\K$ of abelian $\K$-algebras (Example~\ref{Ex:Abelian}), which is determined for instance by the set $\{x_1x_2\}$. Note that the set $\{x_3x_4\}$ determines the same variety of algebras, even though the ideals $I$ and $J$ generated by these two sets are different. On the other hand, the kernel of the unit $\eta_{\Free{X}}\colon {\Free{X}\to L(\Free{X})}$, where $L\colon{\Alg_\K\to \AbAlg_\K}$ is left adjoint to the inclusion, is equal to both $I(\Free{X})$ and~$J(\Free{X})$. In other words, they generate the same $T$-ideal (Definition~\ref{Def:T-ideal}).
\end{example}

\section{Homogeneous identities}\label{S12}
Recall from Definition~\ref{Def:Polynomial} that the \defn{type} of a monomial $\varphi(x_{1},\dots,x_{n})$ is the element $(k_{1},\dots,k_{n})\in \N^{n}$ where $k_{i}$ is the degree of $x_{i}$ in $\varphi(x_{1},\dots,x_{n})$. So for each of the variables $x_{1}$,\dots, $x_{n}$, it keeps track of the number of times this variable occurs in the monomial $\varphi(x_{1},\dots,x_{n})$. Then a polynomial is said to be \defn{homogeneous} if its monomials are all of the same type, and any polynomial may thus be written as a sum of homogeneous polynomials, which are called its \defn{homogeneous components}.

We shall now prove Theorem~\ref{Shestakov} which says that, over an infinite field $\K$ (in particular, over any field of characteristic zero), when a polynomial is an identity of a variety of $\K$-algebras $\V$, then its homogeneous components are also identities of~$\V$. So for instance, the singleton set $\{x(yz)-(xy)z+xy-yx\}$ already determines the variety of associative commutative algebras. As we shall see in the next section, this result has some strong categorical-algebraic consequences.

Let $\psi(x_{1},\dots,x_{n})$ be an identity of a variety of $\K$-algebras $\V$. Write $\psi=\phi_0+\phi_1+\cdots +\phi_k$ where $\phi_i$ is the sum of all monomials in $\psi$ which are of degree $i$ in $x_1$. Now consider $k+1$ distinct elements $\alpha_1$, \dots, $\alpha_{k+1}$ of the (infinite) field~$\K$. Then the Vandermonde determinant 
\[
d=\begin{vmatrix}
	1 & \alpha_1 & \alpha_1^2 & \cdots & \alpha_1^k\\
	1 & \alpha_2 & \alpha_2^2 & \cdots & \alpha_2^k\\
	\vdots & \vdots & \vdots & \ddots & \vdots \\
	1 & \alpha_k & \alpha_k^2 & \cdots & \alpha_k^k\\
	1 & \alpha_{k+1} & \alpha_{k+1}^2 & \cdots & \alpha_{k+1}^k
\end{vmatrix}
=
\prod_{1\leq i<j\leq k+1}(\alpha_j-\alpha_i)
\]
is non-zero. Let $a_1$, \dots, $a_n$ be elements of an algebra $A$ of $\V$. Write $\phi_i(a)= \phi_i(a_{1},\dots,a_{n})$. Then for all $j\in \{1,\dots,k+1\}$ we have 
\begin{align*}
&\phi_0(a)+\alpha_j\phi_1(a)+\cdots+\alpha_j^k\phi_k(a)\\
&=\phi_0(a_{1},\dots,a_{n})+\alpha_j\phi_1(a_{1},\dots,a_{n})+\cdots+\alpha_j^k\phi_k(a_{1},\dots,a_{n})\\
&=\phi_0(\alpha_ja_{1},\dots,a_{n})+\phi_1(\alpha_ja_{1},\dots,a_{n})+\cdots+\phi_k(\alpha_ja_{1},\dots,a_{n})\\
&=\psi(\alpha_ja_1,a_2,\dots,a_n)=0
\end{align*}
where the first equality holds by definition of the $\phi_i(a)$, the second because $\phi_i$ is of degree $i$ in $x_1$, and the third since $\psi=\phi_0+\phi_1+\cdots +\phi_k$. So, in other words,
\[
\begin{ppmatrix}
	0 \\ 0 \\ \vdots \\ 0
\end{ppmatrix}
=\begin{ppmatrix}
	1 & \alpha_1 & \alpha_1^2 & \cdots & \alpha_1^k\\
	1 & \alpha_2 & \alpha_2^2 & \cdots & \alpha_2^k\\
	\vdots & \vdots & \vdots & \ddots & \vdots \\
	1 & \alpha_k & \alpha_k^2 & \cdots & \alpha_k^k\\
	1 & \alpha_{k+1} & \alpha_{k+1}^2 & \cdots & \alpha_{k+1}^k
\end{ppmatrix}
\cdot
\begin{ppmatrix}
	\phi_0(a)\\
	\phi_1(a)\\
	\vdots\\
	\phi_k(a)
\end{ppmatrix}
\]
where the matrix in the middle is invertible because $d\neq 0$. It follows that 
\[
\phi_0(a)=\phi_1(a)=\cdots=\phi_k(a)=0\text{.}
\]
Since $A$ was an arbitrary $\V$-algebra and $a_1$, \dots, $a_n$ were arbitrary elements of $A$, each $\phi_i$ is an identity of $\V$. Note that the monomials in $\phi_i$ are all of the same degree in $x_1$. So repeating this process for the variables $x_2$, \dots, $x_n$, in the end we find that the homogeneous components of $\psi$ are identities of $\V$. Thus we proved:

\begin{theorem}[Zhevlakov--Slin'ko--Shestakov--Shirshov]\label{Shestakov}
In a variety of algebras $\V$ over an infinite field $\K$, if $\psi(x_{1},\dots,x_{n})$ is an identity of $\V$, then each of its homogeneous components $\phi(x_{i_{1}},\dots, x_{i_{m}})$ is again an identity of $\V$.
\end{theorem}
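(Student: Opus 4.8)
The plan is to peel off the homogeneous components one variable at a time, using a Vandermonde-determinant argument that exploits the fact that rescaling a single argument by a scalar $\lambda$ multiplies a monomial by $\lambda$ raised to the degree of that argument. So suppose $\psi(x_1,\dots,x_n)$ is an identity of $\V$. I would first single out $x_1$ and write $\psi=\phi_0+\phi_1+\dots+\phi_k$, where $\phi_i$ collects exactly those monomials of $\psi$ of degree $i$ in $x_1$. The hypothesis that $\psi$ is an identity means that for every algebra $A$ in $\V$, all $a_1,\dots,a_n\in A$, and every scalar $\lambda\in\K$, we may substitute $\lambda a_1\in A$ into the first slot, obtaining $\psi(\lambda a_1,a_2,\dots,a_n)=0$.

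Next I would record the homogeneity observation: since each monomial of $\phi_i$ uses $x_1$ exactly $i$ times and the multiplication is bilinear, evaluation gives $\phi_i(\lambda a_1,a_2,\dots,a_n)=\lambda^i\,\phi_i(a_1,\dots,a_n)$. Combined with the previous vanishing, this yields $\sum_{i=0}^{k}\lambda^i\phi_i(a_1,\dots,a_n)=0$ for every $\lambda\in\K$. Since $\K$ is infinite I may pick $k+1$ distinct scalars $\alpha_1,\dots,\alpha_{k+1}$ and specialise $\lambda=\alpha_j$; the resulting $(k+1)\times(k+1)$ linear system has as coefficient matrix the Vandermonde matrix with $j$-th row $(1,\alpha_j,\dots,\alpha_j^k)$, whose determinant $\prod_{1\le i<j\le k+1}(\alpha_j-\alpha_i)$ is nonzero. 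Hence the matrix is invertible, forcing $\phi_0(a_1,\dots,a_n)=\dots=\phi_k(a_1,\dots,a_n)=0$. As $A$ and the $a_j$ were arbitrary, each $\phi_i$ is itself an identity of $\V$, and now every monomial of $\phi_i$ has the \emph{same} degree in $x_1$.

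Finally I would iterate over the remaining variables: apply the same argument to each $\phi_i$ with respect to $x_2$, splitting it into identities homogeneous in both $x_1$ and $x_2$, then to each of those with respect to $x_3$, and so on. After all $n$ variables have been processed, every resulting polynomial is homogeneous in each variable; regrouping these identities by type reproduces precisely the homogeneous components of $\psi$ in the sense of Definition~\ref{Def:Polynomial}, and each is an identity of $\V$.

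The load-bearing but essentially routine step is the homogeneity identity $\phi_i(\lambda a_1,a_2,\dots,a_n)=\lambda^i\phi_i(a_1,\dots,a_n)$, which one proves by unwinding the definition of substitution (the universal property of $\Free{X}$) together with bilinearity, so that each of the $i$ occurrences of $x_1$ in a monomial contributes one factor of $\lambda$. The one hypothesis that genuinely cannot be removed is that $\K$ be infinite: a finite field need not contain enough distinct scalars $\alpha_j$, and the statement then fails — for instance over $\Z_2$ the polynomial $x+xx$ is an identity of the variety of $\Z_2$-algebras satisfying $xx=x$ (which contains $\Z_2$ itself with its usual multiplication), yet neither homogeneous component $x$ nor $xx$ is.
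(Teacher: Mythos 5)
Your proposal is correct and follows essentially the same route as the paper's proof: the same decomposition $\psi=\phi_0+\cdots+\phi_k$ by degree in $x_1$, the same substitution of $\lambda a_1$ with the homogeneity identity $\phi_i(\lambda a_1,\dots,a_n)=\lambda^i\phi_i(a_1,\dots,a_n)$, the same invertible Vandermonde system built from $k+1$ distinct scalars, and the same iteration over the remaining variables. Your closing counterexample over $\Z_2$ (the identity $x+xx$ in the variety determined by $xx=x$) is a correct and worthwhile addition that the paper does not include.
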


\section{Some recent results}

For certain applications (in Homological Algebra, for example) the axioms of semi-abelian categories are too weak. With the aim of including such applications in the theory, over the last 15 years or so, a whole tree of interdependent additional conditions has been investigated. As a rule, such a condition strengthens the context so that it becomes closer to the abelian setting, while at the same time excluding certain examples.

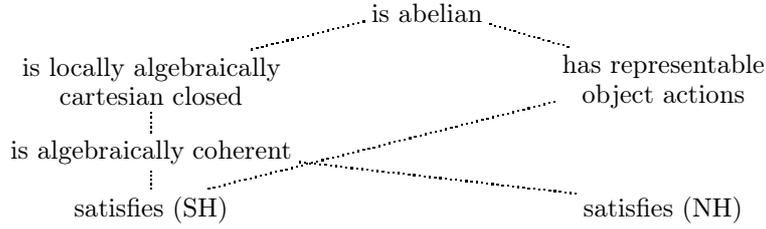
\begin{figure}
\[
\xymatrix@R=1.5ex{& \txt{is abelian} \ar@{.}[ld] \ar@{.}[rd] \\
\txt{is locally algebraically\\
cartesian closed} \ar@{.}[d] && \txt{has representable\\ object actions} \ar@{.}[lldd]\\
\txt{is algebraically coherent} \ar@{.}[d] \ar@{.}[rrd]\\
\txt{satisfies (SH)} && \txt{satisfies (NH)}}
\]
\caption{Some categorical-algebraic conditions, with currently known implications between them: a semi-abelian category\dots}\label{Figure Tree}
\end{figure}

(An instance of this process, unfortunately not within the scope of this text, is the description of the derived functors of the abelianisation functor, which in its simplest form is only valid when an additional condition holds that excludes the semi-abelian category of loops.)

See Figure~\ref{Figure Tree} for an overview of some of the conditions in this tree. Two such additional conditions turn out to be particularly relevant for us in the present context: \emph{algebraic coherence}~\cite{acc} and \emph{local algebraic cartesian closedness}~\cite{Gray2012}. There is no space here to explain what these conditions are useful for, how they were discovered, or what their consequences (SH) and (NH) mean. On the other hand, we can briefly sketch the unexpected interpretation they gain in the current setting of non-associative algebras: the former amounts to a weak associativity rule, while the latter gives \emph{a categorical characterisation of the concept of a (quasi-)Lie algebra}. 

The following is not the original definition, but it is suitable for us:

\begin{definition}
	Given objects $B$ and $X$ in a semi-abelian category $\C$, take their coproduct and then the kernel of the induced split epimorphism $\left\lgroup1_B\;0\right\rgroup\colon B+X\to B$ in order to obtain the short exact sequence
\[
\xymatrix{0 \ar[r] & B\flat X \ar[r] & B+X \ar[r]^-{\left\lgroup1_B\;0\right\rgroup} & B \ar[r] & 0\text{.}}
\] 
Fixing $B$, this process determines a functor $B\flat(-)\colon {\C\to \C}$. (We shall not explore this aspect here, but the functor $B\flat(-)$ occurs in the definition of an \emph{internal $B$-action in $\C$}: it is part of the monad whose algebras are the internal actions.) For any two objects $X$ and $Y$, we have a canonical comparison morphism
\[
\left\lgroup B\flat \iota_X \; B\flat \iota_Y\right\rgroup\colon B\flat X+ B\flat Y\to B\flat (X+Y)\text{.}
\]
The category $\C$ is called \defn{algebraically coherent} when for all $B$, $X$, $Y$ in $\C$, the morphism $\left\lgroup B\flat \iota_X \; B\flat \iota_Y\right\rgroup$ is a regular epimorphism; $\C$ is said to be \defn{locally algebraically cartesian closed (LACC)} when each $\left\lgroup B\flat \iota_X \; B\flat \iota_Y\right\rgroup$ is an isomorphism.
\end{definition}

We have the following two results, of which we shall sketch part of the proofs:

\begin{theorem}\cite{GM-VdL2}\label{Theorem AC iff Orzech}
Let $\K$ be an infinite field. If $\V$ is a variety of non-associative $\K$-algebras, then $\V$ is algebraically coherent iff there exist $\lambda_{1}$, \dots, $\lambda_{16}\in \K$ such that the equations
\begin{align*}
z(xy)=
\lambda_{1}y(zx)&+\lambda_{2}x(yz)+
\lambda_{3}y(xz)+\lambda_{4}x(zy)\\
&+\lambda_{5}(zx)y+\lambda_{6}(yz)x+
\lambda_{7}(xz)y+\lambda_{8}(zy)x
\end{align*}
and 
\begin{align*}
	(xy)z=
	\lambda_{9}y(zx)&+\lambda_{10}x(yz)+
	\lambda_{11}y(xz)+\lambda_{12}x(zy)\\
	&+\lambda_{13}(zx)y+\lambda_{14}(yz)x+
	\lambda_{15}(xz)y+\lambda_{16}(zy)x
\end{align*}
hold in $\V$.
\end{theorem}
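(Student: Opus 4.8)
The plan is to make the comparison morphism $\left\lgroup B\flat \iota_X \; B\flat \iota_Y\right\rgroup$ completely explicit in terms of generated ideals, and then prove the two implications by quite different arguments. The starting observation is this: since the inclusion $\V\to\Alg_\K$ preserves limits, $B\flat X$ is the kernel in $\Alg_\K$ of $\left\lgroup 1_B\;0\right\rgroup\colon B+X\to B$, and hence, by Proposition~\ref{Prop:Quotient-of-Ideal}, it is the ideal of $B+X$ generated by the image of $X$; likewise $B\flat(X+Y)$ is the ideal of $B+X+Y$ generated by the images of $X$ and $Y$. Since $B\flat\iota_X$ and $B\flat\iota_Y$ are just the restrictions of the (split monic) coprojections $B+X\to B+X+Y$ and $B+Y\to B+X+Y$, the image of $\left\lgroup B\flat \iota_X \; B\flat \iota_Y\right\rgroup$ is exactly the subalgebra $S$ of $B\flat(X+Y)$ generated by $B\flat X\cup B\flat Y$. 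By Proposition~\ref{Prop:Surjections}, algebraic coherence of $\V$ is then the assertion that $S=B\flat(X+Y)$ for all objects $B$, $X$, $Y$.

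For the forward implication I would specialise to the free $\V$-algebras $B$, $X$, $Y$ on single generators $z$, $x$, $y$, so that $B+X+Y$ is the free $\V$-algebra $F$ on $\{z,x,y\}$. Because $\K$ is infinite, Theorem~\ref{Shestakov} shows that the $T$-ideal defining $\V$ is spanned by multihomogeneous polynomials, so $F$ carries an $\N^{3}$-grading by the degrees in $z$, $x$, $y$; both $B\flat(X+Y)$ and $S$ are graded subspaces and $\left\lgroup B\flat \iota_X \; B\flat \iota_Y\right\rgroup$ is a graded map. I then inspect the multidegree-$(1,1,1)$ part. On one hand every multidegree-$(1,1,1)$ element of $F$ already lies in the ideal generated by $x$ and $y$, so this part of $B\flat(X+Y)$ equals that of $F$, which is spanned by the images of the twelve multilinear words $z(xy)$, $z(yx)$, $(xy)z$, $(yx)z$, $y(zx)$, $x(yz)$, $y(xz)$, $x(zy)$, $(zx)y$, $(yz)x$, $(xz)y$, $(zy)x$. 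On the other hand $S$ in this multidegree is spanned by products of multihomogeneous elements of $B\flat X\cup B\flat Y$ summing to multidegree $(1,1,1)$; since the only multidegree-one such elements are $x$ and $y$ --- never $z$, because $z\notin B\flat X\cup B\flat Y$ --- every such product is a product of exactly two factors, one from each side, and a short check identifies these with precisely the eight words $y(zx)$, $x(yz)$, $y(xz)$, $x(zy)$, $(zx)y$, $(yz)x$, $(xz)y$, $(zy)x$. Surjectivity at multidegree $(1,1,1)$ therefore forces each of $z(xy)$ and $(xy)z$ to equal, modulo the identities of $\V$, a linear combination of those eight words; as $z$, $x$, $y$ are free generators, these equalities are polynomial identities of $\V$ (Lemma~\ref{Lem:Subst}), and reading off the coefficients yields the required $\lambda_{1},\dots,\lambda_{16}$.

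For the converse I would fix arbitrary $B$, $X$, $Y$ in $\V$ and show that $S$ is not merely a subalgebra but an ideal of $B+X+Y$; since $S$ then contains the images of $X$ and $Y$, it must contain the whole ideal $B\flat(X+Y)$ that they generate, so $S=B\flat(X+Y)$ and $\left\lgroup B\flat \iota_X \; B\flat \iota_Y\right\rgroup$ is a surjection, that is, a regular epimorphism (Proposition~\ref{Prop:Surjections}). The subalgebra $S$ is automatically closed under multiplication, on either side, by the images of $X$ and $Y$, so only closure under multiplication by $B$ is at issue. I would prove this by induction on the length of a word $\varphi(g_{1},\dots,g_{m})$ representing an element of $S$, with each $g_{i}\in B\flat X\cup B\flat Y$: for $m=1$ the products $b\,g_{1}$ and $g_{1}\,b$ lie in $B\flat X$ or $B\flat Y$, these being ideals of $B+X$ and $B+Y$; and when the word is $w_{1}w_{2}$, substituting $z:=b$, $x:=w_{1}$, $y:=w_{2}$ into the first displayed identity (and into the second identity for right multiplication by $b$) rewrites $b(w_{1}w_{2})$ as a linear combination of $w_{2}(bw_{1})$, $w_{1}(w_{2}b)$, \dots, $(bw_{2})w_{1}$, each of which lies in $S$ because $bw_{1}$, $w_{1}b$, $bw_{2}$, $w_{2}b$ do by the induction hypothesis and $S$ is a subalgebra. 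A second, analogous induction --- on the length of a word in elements of $B\cup X\cup Y$, organised along the inductive construction of a generated ideal from Proposition~\ref{Prop:Generated-Ideal}, and again using the two displayed identities to strip off the outermost factor --- upgrades ``closed under multiplication by $B$'' to ``closed under multiplication by all of $B+X+Y$''.

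The main obstacle I anticipate is the bookkeeping in this converse: arranging the nested induction so that the two identities are always applied with the ``outside'' argument being a single element of $B$ (or a strictly shorter word), and making the forward-direction reductions airtight --- in particular, it is exactly at the point of invoking Theorem~\ref{Shestakov} to grade the free algebra $F$ that the hypothesis ``$\K$ infinite'' is used, and without it the argument breaks. The combinatorial identification of the eight admissible multidegree-$(1,1,1)$ words, although routine, is the computational core of the statement and must be carried out carefully to match the two displayed equations.
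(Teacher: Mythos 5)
Your argument is correct. For the implication from algebraic coherence to the two identities you follow essentially the route the paper sketches: identify $B\flat X$ with the ideal of $B+X$ generated by the image of $X$, specialise to free algebras on single generators, and use Theorem~\ref{Shestakov} (this is where the infiniteness of $\K$ enters, exactly as you say) to isolate the multilinear component; your explicit $\N^{3}$-grading of the free $\V$-algebra and the enumeration of the eight reachable multidegree-$(1,1,1)$ words is just a tidier packaging of the paper's remark that a preimage of $b(xy)$ in $B\flat X+B\flat Y$ ``cannot contain any monomials obtained as a product of a $b_{i}$ with $xy$ or $yx$''. Where you genuinely go beyond the text is the converse, which the paper does not prove at all but defers to \cite{GM-VdL2}: you argue directly that the image of $\left\lgroup B\flat \iota_X \; B\flat \iota_Y\right\rgroup$ is the subalgebra $S$ generated by the images of $B\flat X$ and $B\flat Y$, and that the two identities force $S$ to be an ideal, hence to contain the ideal $B\flat(X+Y)$ generated by the images of $X$ and $Y$. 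Your double induction is sound, and you correctly isolate the one point where non-associativity bites: closure of a subspace under multiplication by a generating set of the algebra does not give closure under multiplication by the whole algebra, so the identities must be applied a second time to strip the outer factor off a longer word of $B+X+Y$. The only cosmetic imprecision is attributing ``the kernel of $\left\lgroup 1_B\;0\right\rgroup$ is the ideal generated by the image of $X$'' to Proposition~\ref{Prop:Quotient-of-Ideal}; that proposition only yields that this kernel is an ideal containing the image of $X$, and the reverse inclusion needs the extra one-line observation that $B+X$ modulo the ideal generated by the image of $X$ already retracts isomorphically onto $B$.
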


\begin{exercise}
It follows easily that this is equivalent to $\V$ being a \defn{$2$-variety} in the sense of~\cite{Zwier}: for any ideal $I$ of an algebra $A$, the subalgebra $I^2$ of $A$ is again an ideal. As an immediate consequence, it may now be seen that in the context of varieties of non-associative algebras over an infinite field, algebraic coherence is equivalent to \emph{normality of Higgins commutators} in the sense of \cite{CGrayVdL1}---the condition (NH) in Figure~\ref{Figure Tree}.
\end{exercise}

\begin{exercise}
These conditions are also equivalent to $\V$ being an \emph{Orzech category of interest}~\cite{Orzech}.
\end{exercise}

The implication $\Rightarrow$ of Theorem~\ref{Theorem AC iff Orzech} becomes a straightforward consequence of Theorem~\ref{Shestakov} once we have a sufficiently explicit interpretation of the objects $B\flat X$. Let $B$, $X$ and $Y$ be free $\K$-algebras with a single generator $b$, $x$ and~$y$, respectively. The ideal $B\flat X$ of $B+X$ is generated by monomials of the form $\psi(b,x)$ in which $x$ occurs at least once. If now $B$, $X$ and~$Y$ are free $\V$-algebras, and $B\flat X$ is computed in $\V$, then the only difference is that we need to take classes of such polynomials, modulo the identities of~$\V$.

Algebraic coherence now means that the element $b(xy)$ of $B\flat (X+Y)$ may be obtained as the image of some polynomial $\psi(b_{1},x,b_{2},y)$ in $B\flat X+ B\flat Y$ through the function $\left\lgroup B\flat \iota_X \; B\flat \iota_Y\right\rgroup$. Note that this polynomial cannot contain any monomials obtained as a product of a $b_{i}$ with $xy$ or $yx$. This allows us to write, in the sum $B+X+Y$, the element $b(xy)$ as 
\begin{align*}
\lambda_{1}y(bx)&+\lambda_{2}x(yb)+
\lambda_{3}y(xb)+\lambda_{4}x(by)\\
&+\lambda_{5}(bx)y+\lambda_{6}(yb)x+
\lambda_{7}(xb)y+\lambda_{8}(by)x\\
&+\nu\phi(b,x,y)
\end{align*}
for some $\lambda_{1}$, \dots, $\lambda_{8}$, $\nu\in \K$, where $\phi(b,x,y)$ is the part of the polynomial in $b$, $x$ and~$y$ which is not in the homogeneous component of $b(xy)$. Since $B+X+Y$ is the free $\V$-algebra on three generators $b$, $x$ and $y$, from Theorem~\ref{Shestakov} we deduce that the first equation in Theorem~\ref{Theorem AC iff Orzech} is again an identity in $\V$. Analogously, for~$(xy)b$ we deduce the second equation.

\begin{theorem}\cite{GM-VdL3}
Let $\K$ be an infinite field. Let $\V$ be a non-abelian \LACC\ variety of $\K$-algebras. Then 
\begin{enumerate}
\item $\V = \Liek = \qLiek$ when $\kar(\K)\neq 2$;
\item $\V = \Liek$ or $\V = \qLiek$ when $\kar(\K)= 2$.
\end{enumerate}
\end{theorem}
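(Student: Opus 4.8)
The plan is to sandwich the identities of $\V$ between what algebraic coherence forces and what the sharper isomorphism condition in the definition of \LACC\ forces, working throughout with multilinear identities (legitimate by Theorem~\ref{Shestakov}). Since an isomorphism is in particular a regular epimorphism, a \LACC\ variety is algebraically coherent, so by Theorem~\ref{Theorem AC iff Orzech} (recall that $\K$ is infinite) the two Orzech identities hold in $\V$ for some $\lambda_{1},\dots,\lambda_{16}\in\K$.

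Next I would extract a degree-$2$ identity. Take $B$, $X$, $Y$ to be free $\V$-algebras on single generators $b$, $x$, $y$; as in the proof sketch of Theorem~\ref{Theorem AC iff Orzech}, $B\flat(X+Y)$ is the ideal of $B+X+Y$ spanned by the classes of monomials in which $x$ or $y$ occurs, while $B\flat X+B\flat Y$ is the reflection into $\V$ of the corresponding free product in $\Alg_\K$. Both are graded by degree in $b$, $x$ and $y$, and the comparison $\left\lgroup B\flat\iota_X\;B\flat\iota_Y\right\rgroup$ is graded. The key point is that, since the factors $B\flat X$ and $B\flat Y$ already satisfy the identities of $\V$ and no element of $B\flat X+B\flat Y$ has $b$-degree $1$ and $x$-, $y$-degree $0$, the coproduct ``sees'' far fewer identities of $\V$ than $B\flat(X+Y)$ does; comparing dimensions of low-degree components and demanding an isomorphism forces a nontrivial multilinear identity of degree $2$. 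A short substitution argument ($x\leftrightarrow y$) then shows that, $\V$ being non-abelian, this identity is $xy+yx=0$ or $xy-yx=0$, and a further comparison in a suitable degree (where, e.g., the Jacobi--Jordan variety fails to be \LACC) rules out the commutative case. Thus $\V$ satisfies $xy+yx=0$; polarising $xx=0$ then shows that $\V$ satisfies the alternation identity exactly when $\kar(\K)\neq2$, which is the source of the two cases in the statement.

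It remains to pin down the $\lambda_{i}$ and derive the Jacobi identity. Reducing the Orzech identities modulo $xy=-yx$ cuts the trilinear part down considerably, but anti-commutativity and coherence together still permit ``twisted'' parameter values; to eliminate these I would compare $B\flat X+B\flat Y+B\flat Z$ with $B\flat(X+Y+Z)$, for $X$, $Y$, $Z$ free on one generator, in the quadrilinear degree $(1,1,1,1)$ of $(b,x,y,z)$, and impose that the induced map be an isomorphism. Requiring the relevant degree-$4$ monomials to have unique preimages forces the $\lambda_{i}$ to their Lie values, equivalently the Jacobi identity. A final check that anti-commutativity together with the Jacobi identity generates exactly the $T$-ideal of $\qLiek$ --- and together with $xx=0$ that of $\Liek$ --- yields $\V=\qLiek=\Liek$ when $\kar(\K)\neq2$ and $\V\in\{\Liek,\qLiek\}$ when $\kar(\K)=2$.

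The hard part will be this quadrilinear step: converting ``the comparison map is injective in degree $(1,1,1,1)$'' into explicit equations on the $\lambda_{i}$ requires a finite but delicate description of the relevant degree-$4$ components of the free-product algebra $B\flat X+B\flat Y+B\flat Z$, together with a linear-algebra computation over $\K$ separating the genuine (quasi-)Lie parameters from every other coherent choice. The exclusion of the commutative case --- and, already, the verification that a degree-$2$ identity must exist at all --- is the other point requiring care, since there too one must use the full isomorphism condition and not merely coherence.
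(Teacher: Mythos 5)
Your opening move (an isomorphism is in particular a regular epimorphism, so \LACC\ implies algebraic coherence and hence the Orzech identities of Theorem~\ref{Theorem AC iff Orzech}) agrees with the paper, and your overall programme --- squeeze the $T$-ideal of $\V$ between what coherence forces and what the isomorphism condition forces, working with homogeneous components via Theorem~\ref{Shestakov} --- has the right general shape. But there is a genuine gap exactly where the theorem is hard. Your claim that ``comparing dimensions of low-degree components and demanding an isomorphism forces a nontrivial multilinear identity of degree $2$'' is asserted, not argued, and it cannot work at that degree: both $B\flat X+B\flat Y$ (a coproduct computed in $\V$) and $B\flat(X+Y)$ are objects of $\V$, so their components of type $(0,1,1)$ in $(b,x,y)$ are each spanned by the classes of $xy$ and $yx$ modulo the \emph{same} degree-$2$ identities of $\V$, and the comparison map restricts to an isomorphism there whether or not $\V$ is anti-commutative. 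The failure of \LACC\ for a coherent but non-anti-commutative variety is detected in higher degrees by a much more delicate analysis; the paper is explicit that this general case ``is much harder to prove'' and that the argument of~\cite{GM-VdL3} ultimately shows a certain system of polynomial equations in the $\lambda_i$ to be inconsistent by computer. The same criticism applies to your exclusion of the commutative (Jacobi--Jordan) case and to the quadrilinear step pinning down the $\lambda_i$: you correctly flag these as the delicate points, but no computation is supplied, so the proof is not complete.

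For the part the paper does prove --- the special case $xy=-yx$ --- its route is more elementary than the one you sketch: reduce the first Orzech identity modulo anti-commutativity to $z(xy)=\lambda y(zx)+\mu x(yz)$, substitute $y=z$ and then $x=z$ to force either $\lambda=\mu=-1$ (the Jacobi identity) or the identity $z(zx)=0$; in the latter case polarise to get $x(yb)+y(xb)=0$, observe that the comparison map then kills the nonzero element $x(yb_2)-y(xb_1)$ of $B\flat X+B\flat Y$, and use injectivity from \LACC\ together with Theorem~\ref{Shestakov} to conclude that $x(yz)=0$ is an identity, whence $\V$ is abelian. If you want a write-up that can actually be completed at the level of these notes, I would carry out this two-variable substitution argument in full rather than the three-object degree-$(1,1,1,1)$ comparison you propose, and state clearly that the reduction of the general case to the anti-commutative one is taken from~\cite{GM-VdL3}.
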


Let us first consider the special case where $xy=-yx$ in $\V$. Then we can reduce the first equation of Theorem~\ref{Theorem AC iff Orzech} to $z(xy) = \lambda y(zx) + \mu x(yz)$, for some $\lambda$, $\mu \in \K$. Considering now $y = z$, and then $x = z$, we deduce that either $\lambda = \mu = -1$, or $z(zx) = 0$ is an identity of $\V$. The first case is exactly the Jacobi identity. In the second case, we see that $0 = (x + y)((x+y)b) = x(yb) + y(xb)$. Therefore, the comparison map sends $x(yb_2) - y(xb_1) \in B\flat X + B\flat Y$ to zero in $B \flat (X + Y)$, which via \LACC\ and Theorem~\ref{Shestakov} implies that $x(yz)$ is an identity of $\V$. Variations on these ideas allow us to prove that when $\V$ is \LACC\ and $x(yz)=0$ in $\V$, the variety is necessarily abelian; hence if $\V$ is non-abelian, then the Jacobi identity must hold.

In general, not assuming anti-commutativity, this result is much harder to prove. Our current strategy involves a proof by computer, which shows that a certain system of polynomial equations is inconsistent.

In this context, many basic questions currently remain unanswered. In view of the above result, we may ask questions such as, for instance: \emph{What is associativity?} Can it be captured in terms of a categorical-algebraic condition?

\section{Bibliography}

The aim of these notes is to provide a gentle introduction to two subjects at the same time, semi-abelian categories and non-associative algebras, focusing on the connections between them. ``Gentle'' here is supposed to mean that little prior knowledge of either subject is required to understand most of the results. There is nothing new here: the content is essentially a rearrangement of known results, often not in their most general form, with easier proofs preferred over more efficient ones. The notes are as self-contained as possible, which necessarily means that they are incomplete in many different ways. Here follows an overview of some works which go beyond what is presented in this text.

\begin{itemize}
\item Non-associative algebras: \cite{Bahturin} and \cite{Shestakov}
\item Basic category theory: \cite{Borceux:Cats1} and \cite{MacLane}
\item Protomodular, regular, exact, semi-abelian categories: \cite{Borceux-Semiab}, \cite{Borceux-Bourn}, \cite{Bourn-Gran-CategoricalFoundations} and \cite{Janelidze-Marki-Tholen}
\item (Co)homological applications: \cite{EGVdL} and \cite{RVdL2}
\item Birkhoff subcategories: \cite{Janelidze-Kelly}
\item Recent results: \cite{acc}, \cite{GM-VdL2}, \cite{GM-VdL3} and \cite{Gray2012}
\end{itemize}

\section*{Acknowledgements}
Many thanks to Xabi García-Martínez, who introduced me to non-associative algebras and made me want to understand them better. Thanks to the participants of the \emph{Summer School in Algebra and Topology}, and to François Renaud, Corentin Vienne and the anonymous referees for their feedback on the text. 


\providecommand{\noopsort}[1]{}
\providecommand{\bysame}{\leavevmode\hbox to3em{\hrulefill}\thinspace}
\providecommand{\MR}{\relax\ifhmode\unskip\space\fi MR }
\providecommand{\MRhref}[2]{%
  \href{http://www.ams.org/mathscinet-getitem?mr=#1}{#2}
}
\providecommand{\href}[2]{#2}

\end{document}